\def\xto#1{\xrightarrow[]{#1}}
\def \G{\mathop{\Gamma }\nolimits}
\def \Y{\mathcal  Y}
\newcommand{\Ga}{\raisebox{0.1mm}{$\Gamma$}}
\def\ll{ \lambda}
\def\H {\mathcal H}
\def\Ab{\sf Ab}
\def\der{{\sf Der}}
\def\id{\sf Id}
\def\c {{\bf C}}
\def\d {{\bf D}}
\newcommand{\Z}{\mathbb{Z}}
\def\t{\otimes }
\def\ee{\epsilon }
\def \Coker{\mathop{\sf Coker}\nolimits}
\def \Hom{\mathop{\sf Hom}\nolimits}
\newtheorem{De}{Definition}[section]
\newtheorem{Th}[De]{Theorem}
\newtheorem{Pro}[De]{Proposition}
\newtheorem{Le}[De]{Lemma}
\newtheorem{Co}[De]{Corollary}
\newtheorem{Rem}[De]{Remark}
\begin{document}
\title{Functor homology and homology of commutative monoids}
\bigskip

\author[R.  Kurdiani]{Revaz  Kurdiani}

\address{Ivane Javakhishvili Tbilisi State University,  Georgia} 
\thanks{Research was partially supported by the GNSF Grant ST08/3-387. The second author 
acknowledgement  the support given by the University of Leicester in granting academic study leave}

\author[T. Pirashvili]{Teimuraz  Pirashvili}

\address{Department of Mathematics\\
University of Leicester\\
University Road\\
Leicester\\
LE1 7RH, UK} \email{tp59-at-le.ac.uk}

\maketitle

The aim of this work is to clarify the realtionship between homology theory of commutative monoids constructed \'a la Quillen \cite{ha}, \cite{Q} 
and technology of $\Gamma$-modules as it was developped in \cite{hodge},\cite{aq}, \cite{dold}, \cite{robi}.

In Section \ref{gc} we recall the basics of $\Gamma$-modules and relation with commuttaive algebra (co)homology.
In Section \ref{DDC} we construct an analogue of K\"ahler differentials for commutative monoids. In Section \ref{cmc}  we construct homology theory  for commutative monoids which and then  we prove that commutative monoid homology are  particular case of functor homology develiped in \cite{aq}.

It should be pointout that the cohomology theory of commutative monoids first was constructed by P.-A. Grillet in the 
 series of papers \cite{gr1}-\cite{gr6} (see also a recent work \cite{cegarra}).  So our results shed light on Grillet theory. For instance we relate the commutative monoid (co)homology with Andr\'e-Quillen (co)homology of corresponding monoid algebra. For another application we mention   the Hodge decomposition for  commutative monoid  (co)homology which is an  immediatle consequence of our main result.

\section{ $\Gamma$-modules and commutative algebra (co)homology}\label{gc}
\subsection{Generalities on $\Gamma$-modules}\label{cnobili}

Let $K$ be a commutative ring with unit which is fixed in the whole paper.

For any integer $n\geq 0$, we let $[n]$ be the set $\lbrace 0,1,...,n\rbrace$ with
basepoint $0$. Let $\Ga$ be the  full subcategory of the category of pointed sets consisting of sets $[n]$.
 A \emph{left $\Ga$-module} is  a covariant functor from $\Ga$ 
 to the category of $K$-modules, while  a \emph{right
$\Ga$-module} is  a contravariant functor from $\Ga$ to the
category of $K$-modules. The category of all left $\Gamma$-modules is denoted by $\G$-$mod$, while the category of all right modules is denoted by $mod$-$\G$. It is well-known that the categories $\G$-$mod$ and $mod$-$\G$ are abelian categories with sufficiently many projective and injective objects. 
For any $n\geq 0$ one defines the left $\Gamma$module $\Gamma ^n$ by
$$\Gamma ^n(X)=K[X^n].$$
% \ {\rm and }\ \ \G _n:\ =K[\Hom_{\Gamma}(-, [n])] .$$
Here $K[S]$ denotes the free $K$-module  generated by a set $S$. It is a consequence of the Yoneda lemma that for any left $\Gamma$-module $F$ one has natural isomorphisms
$$\Hom_{\Gamma}(\Gamma^n,F)\ \cong\  F([n]) .$$
Therefore $\Gamma^n$,  $n\geq 0$, are projective generators of the category $\Gamma$-$mod$ .

\subsection{Hochschild and Harrison  (co)homology of $\Gamma$-modules} The definition of these objects are based on the following pointed maps (see  \cite{inv}).
For any $ 0\leq i\leq n+1,$ one defines a map $$\epsilon^i:[n+1]\to [n], \  0\leq i\leq n+1,$$
by $$\epsilon^i(j)=\begin{cases}j&j\leq i,\\ j-1& j>i\leq
n,\\0&j=i=n+1 .\end{cases}$$

For a left $\Ga$-module $F$ the Hochschild homology $HH_*(F)$ is defined as the homology of the chain complex
$$F([0]) \leftarrow F([1])\leftarrow  F([2])\leftarrow \cdots \leftarrow F([n])\leftarrow \cdots$$
where the boundary map $\partial:F([n+1])\to F([n])$ is given by $\sum_{i=0}^{n+1}(-1)^iF(\epsilon^i)$.

Quite similarly for a right $\Ga$-module $T$ one
defines the Hochschild cohomology  $HH^*(T)$ as the cohomology of
the following cochain complex
$$T([0])\to T([1])\to \cdots \to T([n])\to T([n+1])\to\cdots $$
where the coboundary map $\delta:T([n])\to T([n+1])$ is given by
$\delta=\sum_{i=0}^{n+1}(-1)^iT(\epsilon^i)$. 

We have the following obvios fact.

\begin{Le} \label{hh1} Let $F$ be  a  left $\Ga$-module, then $HH_0(F)=F([0])$ and
$$
HH_1(F)=\Coker(\partial:F([2])\to F([1])).$$
Similraly, if $T$ is  a  r $ight \Ga$-module, then $HH^0(T)=T([0])$ and
$$
HH^1(T)=\ker(\delta:T([1])\to T([2])).$$
\end{Le}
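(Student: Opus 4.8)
The plan is to unwind the definitions directly, since both statements are purely about the low-degree part of an explicitly given (co)chain complex. For the homology statement, $HH_0(F)$ is by definition the zeroth homology of the complex $F([0]) \leftarrow F([1]) \leftarrow \cdots$, and since there is no boundary map arriving at $F([0])$ from the left in degree $-1$ (the complex terminates), $HH_0(F)$ is simply the cokernel of the map $\partial \colon F([1]) \to F([0])$ modulo nothing—but more precisely, $HH_0(F) = F([0])/\Im(\partial \colon F([1]) \to F([0]))$. Here I would need to check that this image is in fact all relevant, but the claim asserts $HH_0(F) = F([0])$ outright; the point is that in this normalization the degree-zero term has no incoming differential contributing to $H_0$ in the stated indexing, so I expect the convention places $F([0])$ itself as $HH_0$. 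I would verify this against the indexing: the boundary $\partial \colon F([n+1]) \to F([n])$ lands in $F([n])$, so the lowest differential is $\partial \colon F([1]) \to F([0])$, and $HH_0(F) = \Coker(F([1]) \to F([0]))$—so the asserted equality $HH_0(F) = F([0])$ must rely on this degree-zero differential being zero, which I would confirm by computing $\sum_{i=0}^{1} (-1)^i F(\epsilon^i)$ on $F([1])$ and checking the two maps $\epsilon^0, \epsilon^1 \colon [1] \to [0]$ coincide so that the alternating sum vanishes.

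For $HH_1(F)$, the computation is more straightforward: by definition $HH_1(F) = \Ker(\partial \colon F([1]) \to F([0])) / \Im(\partial \colon F([2]) \to F([1]))$. Given the vanishing of the degree-zero differential established above, the kernel $\Ker(\partial \colon F([1]) \to F([0]))$ is all of $F([1])$, and therefore $HH_1(F) = F([1])/\Im(\partial \colon F([2]) \to F([1])) = \Coker(\partial \colon F([2]) \to F([1]))$, which is exactly the claim. The cohomological statements are entirely dual: $HH^0(T) = \Ker(\delta \colon T([0]) \to T([1]))$, and the vanishing of this lowest coboundary $\delta$ (again because the two coface maps into degree one agree in the relevant range) forces $HH^0(T) = T([0])$; then $HH^1(T) = \Ker(\delta \colon T([1]) \to T([2]))/\Im(\delta \colon T([0]) \to T([1]))$ reduces to $\Ker(\delta \colon T([1]) \to T([2]))$ once the image is seen to be zero.

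The only genuine content, and the step I would treat with care, is verifying that the extreme differentials $\partial \colon F([1]) \to F([0])$ and $\delta \colon T([0]) \to T([1])$ vanish. This amounts to examining the maps $\epsilon^i \colon [1] \to [0]$ for $i = 0, 1$ from the explicit formula: both $\epsilon^0$ and $\epsilon^1$ send everything in $[1]$ to the basepoint $0 \in [0]$, so $F(\epsilon^0) = F(\epsilon^1)$ and the alternating sum $F(\epsilon^0) - F(\epsilon^1)$ is zero (and dually for $T$). I expect this to be the main, though modest, obstacle, since it requires reading off the piecewise definition of $\epsilon^i$ at small $n$ and confirming the coincidence; everything else is formal bookkeeping about where the differentials sit in the complex. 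Once this vanishing is in hand, both equalities in the lemma follow immediately from the definitions of $HH_0, HH_1$ and $HH^0, HH^1$ as the indicated kernels and cokernels.
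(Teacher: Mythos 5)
Your proof is correct, and since the paper states this lemma without proof (calling it an obvious fact), your unwinding of the definitions is exactly the intended verification: the whole content is the observation that $\epsilon^0=\epsilon^1\colon[1]\to[0]$ (both being the unique pointed map to the one-point set), so the extreme differentials $\partial\colon F([1])\to F([0])$ and $\delta\colon T([0])\to T([1])$ vanish and the stated identifications of $HH_0$, $HH_1$, $HH^0$, $HH^1$ follow immediately.
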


Let $S_n$ be the symmetric group on $n$ letters, it acts as a group
of automorphisms  on $[n]$. For integers $p_1,\cdots,p_k$ with
$n=p_1+\cdots +p_k$, we set
$$sh_{p_1,\cdots,p_k}=\sum  {\sf sgn}(\sigma)\sigma\in \Z[S_n]$$
where $\sigma\in S_n$ is running  over all
$(p_1,\cdots,p_k)$-shuffles. Each $sh_{p_1,\cdots,p_k}$ induces a
map $T([n])\to T([n])$, called the shuffle map. Let us denote by
$\tilde{T}_n$ the intersection of the kernels of all shuffle maps.
These groups form a subcomplex of the Hochschild complex, called
Harrison complex \cite{inv}. The groups $Harr^n(T)$, $n\geq 0$ are defined as
the cohomology of the Harrison complex.

By duality we have also Harrison and Hochschild homology of left $\Ga$-module.

The following is a theorem due to J.-L. Loday \cite{inv}. For alternative approach see  \cite{hodge}. 

\begin{Th}\label{Hodge} If  $K$
is a field of characteristic zero, then for any left $\Gamma$-module $F$ and right $\Gamma$-module $T$
there exist so called Hodge decompositions:
%spectral sequences
%$$E^2_{pq}=\Tor ^{\Gamma}_p(\Lambda ^q\circ t, F) \Longrightarrow HH_{p+q} (F),$$
%$$E_2^{pq}=\Ext _{\Gamma}^p(\Lambda ^q\circ t, T) \Longrightarrow HH^{p+q} (T).$$ % Moreover, if  $K$
%is a field of characteristic zero, then the spectral sequences degenerate:
$$HH_n(F)\cong \bigoplus_{i=1}^nHH_n^{(i)} F), \ \ \ n>0,$$
$$HH^n(F)\cong \bigoplus_{i=1}^nHH^n_{(i)}(T), \ \ \ n>0,$$
for suitable defined $HH_n^{(i)} (F)$ and $HH^n_{(i)}(T)$. Moroeover, for $i=1$ one has
$$Harr_{n}(F)=HH_n^{(1)} (F), \ \ Harr^{n}(F)=HH^n_{(1)}(T), \ \ \ n>0.$$
\end{Th}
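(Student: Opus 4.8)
The plan is to produce, for each $n$, a complete system of orthogonal idempotents $e_n^{(1)},\dots,e_n^{(n)}$ in the group algebra $\Q[S_n]$ that act on the Hochschild complex and commute with its differential; the Hodge pieces are then simply the homologies of the resulting summands. First I would exploit the fact that $S_n$ acts on the pointed set $[n]$ by permuting the non-basepoint elements $\{1,\dots,n\}$, so that each $\sigma\in S_n$ induces an automorphism $F(\sigma)$ of $F([n])$ and hence $\Q[S_n]$ acts on the degree-$n$ term of the complex $F([\bullet])$. Because $K$ is a field of characteristic zero, every $n!$ is invertible, and one may introduce the Eulerian idempotents $e_n^{(i)}\in\Q[S_n]\subseteq K[S_n]$ (defined via the logarithm of the identity in the shuffle bialgebra). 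They form a complete orthogonal system: $\sum_{i=1}^n e_n^{(i)}=\mathrm{id}$ and $e_n^{(i)}e_n^{(j)}=\delta_{ij}e_n^{(i)}$.

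The decisive step is the commutation of these idempotents with the Hochschild differential, namely $\partial\circ e_{n+1}^{(i)}=e_n^{(i)}\circ\partial$ for all $i$ and $n$, and I expect this to be the main obstacle: it is a genuine combinatorial identity linking the face operators $F(\epsilon^j)$, which change degree, to the symmetric-group elements occurring in $e_n^{(i)}$. Since all the operators involved are natural in $F$, it suffices to verify the identity on the projective generators $\Gamma^m$ of Subsection \ref{cnobili}, where $F(\epsilon^j)$ and the $S_n$-action become explicit maps of free modules on powers of a pointed set, so that the relation reduces to a finite computation in $\Q[S_{n+1}]$ and $\Q[S_n]$. Conceptually this reflects the compatibility of the face operators with the symmetric-group symmetries underlying the Eulerian decomposition, which is exactly the $\Gamma$-module incarnation of the classical fact that the Eulerian idempotents commute with the Hochschild boundary. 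Once this is in place, each $e^{(i)}$ carves out a subcomplex $C_\bullet^{(i)}$ with $C_\bullet=\bigoplus_{i}C_\bullet^{(i)}$, and setting $HH_n^{(i)}(F):=H_n(C_\bullet^{(i)})$ gives the decomposition of $HH_n(F)$; the cohomological statement for a right module $T$ follows by the dual argument, since the shuffle maps there are the transposes of the ones used in homology.

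It remains to identify the first piece with Harrison (co)homology. Here I would show that the image of $e_n^{(1)}$ is precisely the primitive part for the shuffle structure, that is, the intersection of the kernels of all shuffle maps $sh_{p_1,\dots,p_k}$ with $k\ge 2$; this is the classical statement that the first Eulerian idempotent projects onto the indecomposables, and in characteristic zero it yields an isomorphism between the $e^{(1)}$-summand and the Harrison complex $\tilde T_\bullet$. Since the differentials agree, passing to (co)homology gives $Harr^n(T)=HH^n_{(1)}(T)$ and dually $Harr_n(F)=HH_n^{(1)}(F)$ for $n>0$, completing the argument. The hypothesis that $K$ is a field of characteristic zero enters exactly twice: to define the Eulerian idempotents and to guarantee that the projection onto the primitives is exact.
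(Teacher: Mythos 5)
The paper gives no proof of this theorem --- it is quoted as a result of Loday with the citation \cite{inv} (and \cite{hodge} for an alternative approach) --- and your Eulerian-idempotent argument is precisely the proof in that reference: the complete orthogonal system $e_n^{(i)}\in\Q[S_n]$, the commutation $\partial\circ e_{n+1}^{(i)}=e_n^{(i)}\circ\partial$ with $\partial=\sum_i(-1)^iF(\epsilon^i)$ checked universally in the category algebra of $\Gamma$ (equivalently on the representable generators $\Gamma^m$), and the identification of the image of $e^{(1)}$ with the common kernel of the proper shuffle maps, i.e.\ the Harrison subcomplex. Your outline is correct, and the step you single out as the main obstacle is indeed the combinatorial heart of Loday's proof.
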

\subsection{Andr\'e-Quillen  (co)homology of $\G$-modules}  We recall some material from \cite{aq}. 

{\it A partition} $\ll =
(\ll_1,\cdots, \ll_k)$
is a sequence
of natural numbers $\ll_1\geq \cdots  \geq \ll_k\geq 1$. 
The sum of partition is given by
$s(\ll):=\ll_1+\cdots +\ll_k$, while the group $\Sigma (\ll)$ 
is a product of the corresponding symmetric groups 
$$\Sigma ({\ll}):=\Sigma_{\ll_1}\times \cdots \times \Sigma_{\ll_k}.$$
which is identified with the Young subgroup of $\Sigma _{s(\ll)}$. 
Let us observe that $\Sigma _n=Aut_{\G}([n])$ and therefore $\Sigma _n$
acts on $F([n])$ and $T([n])$ for any left $\Gamma$-module $F$ and right $\Gamma$-module $T$.

Let $$0\to F_1\to F\to F_2\to 0 $$
be an exact sequence of left $\Ga$-modules. It is called a {\it $\Y$-exact 
sequence} if for any partition $\ll$ with $s(\ll)=n$ the 
induced map 
$$F([n])^{\Sigma (\ll)}\to F_2([n])^{\Sigma (\ll)}$$ 
is surjective.  The class of  $\Y$-exact 
sequences is proper in the sense of MacLane \cite{homology}.

A left $\Gamma$-module $F$ is $\Y$-\emph{projective}, if the functor $\Hom_\Gamma(F,-)$ takes $\Y$-exact sequences to exact sequences. For example $S^n\Gamma^1$ is a $\Y$-projective \cite{aq}. Here $S^n$ denotes the $n$-th symmetric power. According to \cite{aq} for any left $\Gamma$-module $F$ there is a $Y$-exact sequence
$$0\to F_1\to F_0\to F\to 0$$
with $\Y$-projective $F$.  Hence one can take relative left
derived functors of the functor $HH_1:\Gamma$-$mod\to K$-$mod$. The values of these derived functors on a left $\Gamma$-module $F$ is denoted by ${\pi^\Y} _*(F)$. So by the definition the functors ${\pi^\Y}_*$ 
are uniquely defined (up to isomorphism) by the following properties

\begin{Le} There exist unique family of functors ${\pi^\Y}_n:\Gamma$-$mod\to K$-$mod$, $n\geq 0$, such that

i) ${\pi^\Y}_0(F)=HH_1(F)$.

ii) For any $\Y$-exact 
sequence 
$$0\to F_1\to F\to F_2\to 0 $$
there is a long exact sequence
$$\cdots \to { \pi^\Y}_{n+1}(F_2)\to {\pi^\Y}_n(F_1)\to \cdots \to {\pi^\Y}_0(F_1)\to
{ \pi^\Y}_0(F)\to {\pi^\Y}_0(F_2)\to 0.$$

iii) The functor ${\pi^\Y}_n$ vanishes on $\Y$-projective objects, $n\geq 1$.
\end{Le}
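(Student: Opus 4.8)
The plan is to recognise this statement as the standard existence–and–uniqueness theorem for relative left derived functors in the proper homological algebra of MacLane \cite{homology}, applied to the functor $HH_1:\Gamma$-$mod\to K$-$mod$ relative to the class of $\Y$-exact sequences. Two inputs make the machinery run, and both have just been recorded from \cite{aq}: the $\Y$-exact sequences form a proper class, and there are enough $\Y$-projectives (every $F$ sits in a $\Y$-exact $0\to F_1\to F_0\to F\to 0$ with $F_0$ $\Y$-projective). Once these are in hand, producing the functors and verifying the three properties is formal.

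First I would construct the functors. Iterating the given presentation — applying the same $\Y$-exact resolution step to $F_1$, then to its kernel, and so on — splices to a $\Y$-projective resolution
$$\cdots \to P_2\to P_1\to P_0\to F\to 0$$
which is a $\Y$-exact complex, each splicing short exact sequence being $\Y$-exact. I then set ${\pi^\Y}_n(F):=H_n\bigl(HH_1(P_\bullet)\bigr)$. Independence of the chosen resolution and functoriality in $F$ follow from the relative comparison theorem: since each $P_n$ is $\Y$-projective and the resolutions are $\Y$-exact, any morphism of objects lifts to a chain map of resolutions, unique up to $\Y$-chain homotopy; applying the additive functor $HH_1$ and passing to homology yields well-defined natural maps. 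This is verbatim the argument for ordinary derived functors, with ``epimorphism'' and ``exact'' replaced by their $\Y$-counterparts.

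Next come the three properties. For (i), the point is that $HH_1=\Coker(\partial:F([2])\to F([1]))$ is \emph{right exact}: evaluation at $[1]$ and $[2]$ is exact and cokernel is right exact, so $HH_1$ carries the right-exact tail $P_1\to P_0\to F\to 0$ to an exact sequence, whence $H_0\bigl(HH_1(P_\bullet)\bigr)=HH_1(F)$. Property (iii) is immediate: if $F$ is $\Y$-projective then $0\to F\xrightarrow{\;\mathrm{id}\;}F\to 0$ is a $\Y$-projective resolution, so the higher homology vanishes. For (ii), I would use the relative horseshoe lemma to fit a $\Y$-exact $0\to F_1\to F\to F_2\to 0$ into a short exact sequence of $\Y$-projective resolutions $0\to P'_\bullet\to P_\bullet\to P''_\bullet\to 0$. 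The crucial observation is that in each degree this sequence is $\Y$-exact with $\Y$-projective quotient $P''_n$, hence \emph{split}; therefore the additive functor $HH_1$ preserves its exactness degreewise, giving a short exact sequence of chain complexes whose long exact homology sequence is the asserted one.

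Finally, uniqueness is the usual universality (dimension-shifting) argument: for any family $\{G_n\}$ satisfying (i)--(iii), the long exact sequence attached to $0\to F_1\to F_0\to F\to 0$ together with the vanishing of the higher $G_n$ on the $\Y$-projective $F_0$ forces natural isomorphisms $G_{n+1}(F)\cong G_n(F_1)$ for $n\geq 1$, identical to those for ${\pi^\Y}$; induction on $n$ starting from $G_0=HH_1={\pi^\Y}_0$ then produces a natural isomorphism $G_n\cong {\pi^\Y}_n$. I expect the main obstacle to be purely bookkeeping around the long exact sequence — one must check that the degreewise splittings can be chosen compatibly so that the resulting connecting maps are natural — while everything else is forced by right-exactness of $HH_1$ and by the proper-class axioms already granted.
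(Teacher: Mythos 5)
Your proposal is correct and coincides with what the paper intends: the paper gives no written proof, presenting the lemma as the standard existence--uniqueness characterization of relative left derived functors of $HH_1$ with respect to the proper class of $\Y$-exact sequences, given enough $\Y$-projectives --- exactly the machinery you spell out (resolutions, comparison theorem, relative horseshoe lemma with degreewise splitting, right-exactness of $HH_1$, and dimension shifting for uniqueness). Your reconstruction fills in precisely the details the paper delegates to MacLane's relative homological algebra and to \cite{aq}.
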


By a dual argument for any right $\Gamma$-module $T$ one obtains $K$-modules ${\pi_\Y}^*(T).$
\subsection{$\Gamma$-modules and commutative algebras} The classical Hochschild cohomology (as well as the Harrison or Andre-Quillen (co)homology) of commuttaive algebas  is a particular case of the cohomology of $\Gamma$-modules \cite{inv}, \cite{hodge}, \cite{aq}. We recall the corresponding results.
Let $R$ be a commutative $K$-algebra and  $A$ be an $R$-module. We have  a left and right $\Gamma$-modules
${\mathcal L}_*(R,A)$ and ${\mathcal L}^*(R,A)$ defined  on objects by
$${\mathcal L}^*(R,A)([n]):=\Hom(R^{\t n},A), \  \ \ {\mathcal L}_*(R,A)([n]):=R^{\t n}\t A.$$
For a pointed map $f:[n]\to [m]$, the action of  ${\mathcal L}^*(R,A)$ on
$f$ is given by
$$f^*(\psi)(a_1\t\cdots \t a_n)=b_{0}\psi(b_1\t\cdots \t b_m)$$
 while for the functor ${\mathcal L}_*(R,A)$ one has:
$$f_*(a_0\t\cdots\t a_n)=b_0\t\cdots\t b_m$$
where  $b_j=\prod _{f(i)=j}a_i, \ j=0,\cdots ,n.$

Then one has \cite{inv}:
$$HH_*({\mathcal L}_*(R,A))=HH_*(R,A), \ \ HH^*({\mathcal L}^*(R,A))=HH^*(R,A),$$
$$Harr_n({\mathcal L}_*(R,A))=Harr_m(R,A), \ \ Harr^n({\mathcal L}^*(R,A))=Harr^m(R,A).$$
where $HH_*(R,A)$ and $Harr_*(R,A)$ (resp. $HH^*(R,A)$, $Harr^*(R,A)$ ) are the Hochschild and Harrison (co)homology of $R$ with coefficients in $A$.

By \cite{aq}  a similar result is aslo true for Andr\'e-Quillen  (co)homology of commutative rings. In order to state this result,
let us first  recall  the  definition of  Andr\'e-Quillen (co)homology  \cite{Q}.

 Let $\bf SCR$ be category of simplicial commutative rings and let $\bf SS$ be the category of simplicial
 sets and let
 $U:{\bf SCR} \to \bf SS$ be a forgetful functor. According to \cite{ha}
 there is a unique closed model category structure on the category $\bf SCR$ such
 that a morphism $f:X_*\to Y_*$ of $\bf SCR$ is
weak equivalence (resp. fibration) if $U(f)$ is a weak equivalence
(resp. fibration) of simplicial sets. A simplicial commutative ring
 $X_*$
is called \emph{free} if each $X_n$ is a free commutative ring
 with a
base $S_n$, such that degeneracy operators $s_i:X_n\to X_{n+1}$ maps
$S_n$ to $S_{n+1}$, $0\leq i\leq n$. Thanks to \cite{ha} any free
simplicial commutative ring
 is cofibrant and any cofibrant object is a retract of a free
 simplicial commutative ring.

We let $C^*(V^*)$ be the cochain complex associated to a
cosimplicial abelian group $V^*$. Let $R$  be a commutative ring and
let $A$ be an $R$-module.  Then the Andr\'e-Quillen cohomology of
$R$ with coefficients in $A$ is defined by (see \cite{Q}):
$${\sf D}^*(R,A):=H^*(C^*(\der({P_*},A))),$$
where $P_*\to R$ is a cofibrant replacement of the ring $R$
considered as a constant simplicial ring and $\der$ denotes the
abelian group of all  $K$-derivations.

The Andr\'e-Quillen homology of
$R$ with coefficients in $A$ is defined by
$${\sf D}_*(R,A):=H_*(C_*( A\otimes _{P_*}\Omega^1_{P_*})),$$
where $\Omega^1_R$ is the K{\"a}hler differentials of  a commutative $K$-algebra  $R$.

The main result of \cite{aq} claims that there are natural isomorphisms
$${\pi^\Y}_*({\mathcal L}_*(R,A))=D_*(A,M), \ \  \ {\pi_\Y}^*({\mathcal L}^*(R,A))=D^*(A,M).$$

\section{The category $\H(C)$ associated to a commutative monoid $C$} \label{DDC}

\subsection{ Definition}
Let $C$ be a commutative monoid. Define the category $\H(C)$ as follows. 
Objects of $\H(C)$ are elements of $C$. A morphism   from an
element $a\in C$ to an element $b$ is a pair $(c,a)$ of elements of
$C$ such that $b=ca$. To simplify notations we write $a\xto{c} ac$ for a morphism $(a,c):a\to b=ac$.
If $:a\xto{c} ac$ and $ac\xto{d} acd$ are
morphisms in $\H(C)$, then the composite of these morphisms in ${\H}(C)$ is $a\xto{cd} acd$. 

It is clear that $1\in C$ is an initial object of $\H(C)$.

 A \emph{left  $\H(C)$-module} is  a covariant functor $A: \H(C)\to \Ab$, similarly a 
\emph{right  $\H(C)$-module} is  a contravariant functor $A: \H(C)^{op}\to \Ab$. 

We let $\H(C)$-$mod$ be  the category of left $\H(C)$-modules, while  $mod$-$\H(C)$ denotes the category of right $\H(C)$-modules. If
$M$ is a left $\H(C)$-module. Then the value
of $M$  on the element $a\in C$ (considered as object of $\H(C)$) is denoted by  $M(a)$. Moreover if $a,b,c\in C$ and $b=ca$,
then we have an induced map $c_*:M(a)\to M(b)$, with obvious
properties $1_*=\id$ and $(c_1c_2)_*=c_{1*}c_{2*}$. 

Quite similarly, if $N$ is a right $\H(C)$-module, then the value
of $N$ on the element $a\in C$ is denoted by $N(a)$. Moreover if $a,b,c\in C$ and $b=ca$,
then we have an induced map $c^*:N(b)\to N(a)$, with obvious
properties $1^*=\id$ and $(c_1c_2)^*=c_2^*c_{1}^*$. 

The categories $\H(C)$-$mod$ and  $mod$-$\H(C)$ are abelian categories
with enough projective and injective objects. For any element $a$ of $C$ we let $C^a$  and $C_a$  be respectively the left   and  right  $\H(C)$-modules defined by
$$ C^a(x)=\bigoplus _{c\in (x:a)}\Z,  \  C_a(x)=\bigoplus _{c\in (a:x)}\Z.$$
Here for elements $a,b\in C$ we let $(b:a)$ be the set of all elements $c\in C$ such that $b=ac.$
By Yoneda lemma for any left $\H(C)$-module $A$ and for any right  $\H(C)$-module  $B$ one has isomorphisms
$$Hom_{{\H}(C)}(C^a,A)\cong A(a), \ \  \   \ \ \ \ \  \ \ \ Hom_{{\H}(C)}(C_a,B)\cong B(a)).$$
It follows that $C^a$,  $a\in C$ form a family of projective generators of the category $\H(C)$-$mod$. Similarly   $C_a$,  $a\in C$ form a family of projective generators of the category $mod$-$\H(C)$.

Let  $N$ be a right $\H(C)$-module and $M$ be a left $\H(C)$-module. We let $N\otimes _{\H(C)} M$ be the abelian group generated by elements of the form $x\otimes y$, where $x\in N(a)$, $y\in M(a)$, $a\in M$. These elements are subject to the following relations
$$(x_1+x_2)\otimes y=x_1\otimes y+x_2\otimes y,$$
$$x\otimes (y_1+y_2)=x\otimes y_1+x\otimes y_2,$$
$$c^*(z)\otimes y=z\otimes c_*(y).$$
Here $c\in M$, $x,x_1,x_2\in N(a)$, $y,y_1,y_2\in M(a)$, $z\in N_{ca}$. Then one has $$N\otimes _{{\H}(C)}C^a\cong N(a), \ \ \ C_a\otimes _{{\H}(C)}M\cong M(a).$$

If $f:C\to C'$ is a homomorphism of monoids, then $f$ induces a functor ${\H}(f):{\H}(C)\to {\H}(C')$ in an obvious way. Thus for any left ${\H}(C')$-module $M$ one has a left ${\H}(C)$-module $f^*(M)$, which is given by
$$f^*(M) (i)=M(f(i)).$$
In this way one obtains a functor $f^*$ from the category of (left or right) modules over $\H(C')$ to the category of modules over $\H(C)$.
% which has both left and right adjoints denoted respectively by $f_*$ and $f_!$.

\subsection{$K[C]$-modules and $\H(C)$-modules} \label{2.3} We let $K[C]$ be the monoid algebra of the monoid $C$. Any
$K[C]$-module $A$ gives rise to the left $\H(C)$-module
$j^*(A)$ 
%and right $\H(C)$-module $j^*(A)$ 
which is defined by
 $$j^*(A)(a)=A$$ 
and  for $b=ca$,
the induced morphisms
$$A=j_*(A)(a)\xto{c_*} j_*(A)(b)=A$$
%$$j^*(c,a): A=j^*(A)(b)\to j^*(A)(a)=A$$
is simply the multiplication by $c$. 
%Clearly, the same construction works also for right $\H(C)$-modules.

If $M$ is a left $\H(C)$-module, we let $j_*(M)$ be the following $K[C]$ module. As an abelian group one has
$$j_*(M)=\bigoplus_{x\in C} M(x),$$
The module structure is defined as follows: for $x\in C$, $a\in M(x)$ and  $c\in C$ one has $$ci_x(a)=i_{cx}(c_*(a)).$$ Here $i_x$ is the canonical inclusion $M(x)\to j_*(M)$, $x\in C$. 
\begin{Le} The functor  $j_*$ is a left adjoint functor  to $j^*$. 
\end{Le}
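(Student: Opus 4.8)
The plan is to exhibit, for each left $\H(C)$-module $M$ and each $K[C]$-module $A$, a bijection
$$\Phi_{M,A}:\Hom_{K[C]}(j_*(M),A)\ \xto{\cong}\ \Hom_{\H(C)}(M,j^*(A))$$
that is natural in both arguments. First I would unwind each Hom-set into concrete data. On the left, since $j_*(M)=\bigoplus_{x\in C}M(x)$ is generated as a $K$-module by the elements $i_x(a)$ with $a\in M(x)$, and $K[C]$ is generated as a $K$-module by the elements $c\in C$, a $K$-linear map $\phi:j_*(M)\to A$ is $K[C]$-linear if and only if $\phi(c\cdot i_x(a))=c\cdot\phi(i_x(a))$ for all $c,x\in C$ and $a\in M(x)$. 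Using the module structure $c\,i_x(a)=i_{cx}(c_*(a))$, this is exactly
$$\phi(i_{cx}(c_*(a)))=c\cdot\phi(i_x(a)).$$

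On the right, a morphism $\psi:M\to j^*(A)$ of $\H(C)$-modules is a family of $K$-linear maps $\psi_a:M(a)\to j^*(A)(a)=A$, one for each object $a\in C$, such that for every morphism $a\xto{c}ca$ of $\H(C)$ the naturality square relating $c_*:M(a)\to M(ca)$ to the structure map of $j^*(A)$ commutes. Since the latter map is, by the definition of $j^*(A)$, multiplication by $c$ on $A$, this condition reads
$$\psi_{ca}(c_*(a_0))=c\cdot\psi_a(a_0),\qquad a_0\in M(a).$$
I would then define $\Phi_{M,A}$ by $\phi\mapsto(\psi_a)_a$ with $\psi_a:=\phi\circ i_a$, and its candidate inverse by sending a family $(\psi_a)_a$ to the unique $K$-linear map $\phi:\bigoplus_{x}M(x)\to A$ satisfying $\phi\circ i_a=\psi_a$, which exists by the universal property of the direct sum. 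Comparing the two displayed compatibility conditions shows that each is the verbatim transcription of the other, so $\Phi_{M,A}$ and its proposed inverse are well defined, $K$-linear, and mutually inverse.

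The real content is thus the single observation that the defining relation of the $K[C]$-module structure on $j_*(M)$ coincides with the naturality condition for $\H(C)$-module maps into $j^*(A)$; I do not expect any genuine obstacle beyond this bookkeeping. The only point deserving a line of care is the reduction of $K[C]$-linearity to the monoid generators $c\in C$ and to the $K$-module generators $i_x(a)$ of $j_*(M)$, which is immediate from $K$-linearity.

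Finally, naturality of $\Phi_{M,A}$ in $A$ and in $M$ follows formally. For a $K[C]$-linear map $A\to A'$, postcomposition commutes with the componentwise assignment $\psi_a=\phi\circ i_a$; for a morphism $M'\to M$ of $\H(C)$-modules, precomposition is compatible with the structural inclusions $i_a$ on the $j_*$ side and with the componentwise maps $\psi_a$ on the $j^*$ side. Since both $\Phi$ and its inverse are described purely in these terms, the required naturality squares commute, and the adjunction $j_*\dashv j^*$ follows.
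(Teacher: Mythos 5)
Your proof is correct and follows essentially the same route as the paper: both unwind a morphism $M\to j^*(A)$ into a family of maps $\xi_a:M(a)\to A$ satisfying the naturality square, observe that this is precisely the data of a $K[C]$-linear map out of $\bigoplus_{x\in C}M(x)$, and conclude via the universal property of the direct sum. You spell out the naturality of the bijection, which the paper leaves implicit, but there is no difference in substance.
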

\begin{proof} For a left $\H(C)$-module $M$ and a left $K[C]$-module $A$ an elements $$\xi\in\Hom_{\H(C)}(M, j^*(A))$$ is given by the family of $K$-module homomorphisms $\xi_a:M(a)\to A$, $a\in A$ such that for any $c\in C$ the following 
$$\xymatrix{M(a)\ar[r]^{\xi_a} \ar[d]_{c_*}& A\ar[d]^c\\ M(ac)\ar[r]_{\xi_{ac}}&A}$$
The homomorphisms $\xi_a$, $a\in C$ defines a homomorphism of $K$-modules $$\hat{\xi}:j_*(M)=\bigoplus_{a\in C}M(a)\to A$$
which clearly is $K[M]$-homomorphism. So, $\xi\mapsto \hat{\xi}$ gives rise a homomorphism $\Hom_{\H(C)}(M, j^*(A))\to \Hom_{K[C]}(j_*(M), A)$ which is obviously an isomorphism.
\end{proof}

\subsection{Derivations, differentials and (co)homology in the theory of commutative algebras}

Let $C$ be a commutative monoid and let $M$ be a left ${\H}(C)$-module.
A \emph{derivation}
$\delta:C\to M$  of $C$ with values in $M$ is a function which assigns to
each element $a\in C$  an element $\delta(a)\in M(a)$, such that
$$\delta(ab)=a_*(\delta(b))+b_*(\delta(a)).$$ The abelian group of all derivations of
$C$ with values in $M$ is denoted by $\der(C,M)$.

We claim that there exist a universal derivation. In fact we construct a left $\H(C)$-module $\Omega_C$, called \emph{differentials} of a monoid $C$. It is a 
left $\H(C)$-module generated by symbols $da\in \Omega_C(a)$ one for every element $a\in C$, subject to relations
$$d(ab)=a_*(d(b))+b_*(d(a))$$
for every $a$ and $b\in C$. It follows from the construction that $a\mapsto da$ is a derivation, which is clearly universal one, in the sense that for any derivation $\delta:C\to,M$ there is a unique homomorphism of $\H(C)$-modules $\delta^*:\Omega_C\to M$ such that $\delta(a)=\delta^*(da)$. Thus  for any left $\H(C)$-module $M$ one has a canonical isomorphism
$$\der(C,M)\cong Hom_{\H(C)}(\Omega_C,M).$$

\begin{Le}  \label{22}  One has an isomorphism of $K[C]$-modules
$$j_*(\Omega_C)=\Omega_{K[C]}^1$$
Here $j_*:{\H}(C)$-$mod\to K[C]$-$mod$ is the functor constructed in Section \ref{2.3} and $\Omega_{K[C]}^1$ is the  K\"ahler differentials of the $K$-algebra $K[C]$.
\end{Le}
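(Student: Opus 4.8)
The plan is to establish the isomorphism by comparing universal properties on both sides, using the adjunction from the preceding lemma as the key bridge. The Kähler differentials $\Omega^1_{K[C]}$ are characterized by the universal property that $K[C]$-module homomorphisms $\Omega^1_{K[C]}\to A$ correspond naturally to $K$-derivations $K[C]\to A$ for any $K[C]$-module $A$. On the monoid side, $\Omega_C$ is characterized (as established just above) by $\Hom_{\H(C)}(\Omega_C,M)\cong\der(C,M)$. So the strategy is: apply $j_*$ to $\Omega_C$, and show that $j_*(\Omega_C)$ satisfies exactly the universal property defining $\Omega^1_{K[C]}$.

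**First I would** unwind the relevant Hom-sets. For any $K[C]$-module $A$, the adjunction gives
$$\Hom_{K[C]}(j_*(\Omega_C),A)\cong\Hom_{\H(C)}(\Omega_C,j^*(A))\cong\der(C,j^*(A)),$$
where the last step is the universal property of $\Omega_C$. **Then I would** identify $\der(C,j^*(A))$ with the $K$-derivations $K[C]\to A$. A derivation $\delta\colon C\to j^*(A)$ assigns to each $a\in C$ an element $\delta(a)\in j^*(A)(a)=A$ satisfying $\delta(ab)=a_*(\delta(b))+b_*(\delta(a))=a\cdot\delta(b)+b\cdot\delta(a)$, using that the induced maps in $j^*(A)$ are multiplication by the monoid element. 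This is precisely the Leibniz rule for the multiplicative monoid $C$ inside $K[C]$, so such a $\delta$ extends uniquely $K$-linearly to a $K$-derivation $K[C]\to A$, and conversely every $K$-derivation of $K[C]$ restricts to one on the generating monoid $C$. Hence $\der(C,j^*(A))\cong\der_K(K[C],A)\cong\Hom_{K[C]}(\Omega^1_{K[C]},A)$.

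**Combining** these natural isomorphisms yields $\Hom_{K[C]}(j_*(\Omega_C),A)\cong\Hom_{K[C]}(\Omega^1_{K[C]},A)$ naturally in $A$, and Yoneda then forces $j_*(\Omega_C)\cong\Omega^1_{K[C]}$. **I expect the main obstacle** to be the derivation-extension step: one must check carefully that a monoid derivation on $C$ extends to a genuine $K$-linear derivation on all of $K[C]$, and that this correspondence is a bijection respecting the module structures. The subtlety is that $K[C]$ is generated as a $K$-algebra by the image of $C$, and derivations are determined by their values on algebra generators, but one has to verify that the monoid Leibniz relation is exactly the restriction of the algebra Leibniz relation and that no additional relations among the $da$ are needed beyond those already imposed in defining $\Omega_C$. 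Once that bijection is confirmed to be natural in $A$, the rest is a formal consequence of Yoneda and the established adjunction.
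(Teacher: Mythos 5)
Your proposal is correct and follows essentially the same route as the paper: both identify $\Hom_{K[C]}(j_*(\Omega_C),A)$ with $\der(C,j^*(A))$ via the adjunction and the universal property of $\Omega_C$, identify this in turn with $\der(K[C],A)\cong\Hom_{K[C]}(\Omega^1_{K[C]},A)$, and conclude by Yoneda. The only difference is that you spell out the derivation-extension step that the paper leaves implicit.
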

\begin{proof} Let $A$ be a $K[C]$-module. Then we have
$$\der(C,j^*(A))=Hom_{\H(C)}(\Omega_C,j^*(A))=Hom_{K[C]}(j_*(\Omega),A).$$
On the other hand
$$\der(C,j^*(A))= \der(K[C],A)=Hom_{K[C]}(\Omega^1_{K[A]},A)$$
and the result follows from the Yoneda lemma.
\end{proof}

\subsection{The case  $C=\mathbb{N}$}\label{t}
If $C$ is the free abelian monoid with a generator $t$, then a left ${\H}(C)$-module is nothing but a diagram of abelian groups
$$M= (M_0\xto{t} M_1\xto{t} M_2\xto{t} M_3\xto{t}\cdots \,)$$
In particular the projective object $C^n$ corresponds to the diagram
$$0\to 0\to\cdots \to 0\to\Z\xto{1}\Z\xto{1}\cdots$$
where the first nontrivial group appears at the place $n$.

Quite similarly a right ${\H}(C)$-module is nothing but a diagram of abelian groups
$$N=(\,\cdots \xto{t} N_3\xto{t} N_2\xto{t} N_1\xto{t}N_0).$$
In particular the projective object $C_n$ corresponds to the diagram
$$\cdots\to 0\to 0\to \Z\xto{1}\Z\xto{1}\cdots \xto{1} \Z$$
where the first nontrivial group appears at the place $n$.

One easily observes that for any left ${\H}(C)$-module $M$ one has an isomorphism
$$\der(C,M)\cong M_1$$
which is given by $\delta\mapsto \delta(t)$. This follows from the fact that $\delta(t^n)=nt^{n-1}\delta(t).$ Thus
$$\Omega_C=C^1=(0\to \mathbb{Z}\xto{1}\mathbb{Z}\xto{1}\mathbb{Z}\xto{1}\cdots \,).$$
\subsection{Product of two monoids} Let $C$ be a product of two monoids: $C=C_1\times C_2$. Then ${\H}(C)={\H}(C_1)\times {\H}(C_2)$. Assume $M_1$ and $M_2$ are (say left) $\H(C_1)$ and $\H(C_2)$-modules respectively. Then one can form a $\H(C)$-module $M_1\boxtimes M_2$ as follows:
$$M_1\boxtimes M_2(x_1,x_2)=M_1(x_1)\otimes M_2(x_2).$$
\begin{Le}\label{2.4} For any elements $c_1\in C$ and $c_2\in C_2$, one has
$$C^{(c_1,c_2)}=C^{c_1}\boxtimes C^{c_2}$$
and 
$$C_{(c_1,c_2)}=C_{c_1}\boxtimes C_{c_2}.$$
\end{Le}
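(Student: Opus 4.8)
The plan is to evaluate both $\H(C)$-modules on an arbitrary object of $\H(C)$ and exhibit a natural isomorphism between the resulting abelian groups. The starting point is the Yoneda-type description already used to define the projective generators: a morphism $a\to x$ in $\H(C)$ is precisely an element $c$ with $x=ca$, so $C^a(x)=\bigoplus_{c\in(x:a)}\Z$ is the free abelian group $\Z[(x:a)]$ on the hom-set $\H(C)(a,x)=(x:a)$. Under the identification $\H(C_1\times C_2)=\H(C_1)\times\H(C_2)$, an element $(d_1,d_2)$ lies in $\big((x_1,x_2):(c_1,c_2)\big)$ if and only if $(x_1,x_2)=(c_1,c_2)(d_1,d_2)=(c_1d_1,c_2d_2)$, which happens exactly when $d_1\in(x_1:c_1)$ and $d_2\in(x_2:c_2)$. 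Hence there is a canonical bijection $\big((x_1,x_2):(c_1,c_2)\big)=(x_1:c_1)\times(x_2:c_2)$.

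First I would combine this bijection with the natural isomorphism $\Z[S]\otimes\Z[T]\cong\Z[S\times T]$ sending $s\otimes t$ to $(s,t)$. This yields, for every object $(x_1,x_2)$,
$$C^{(c_1,c_2)}(x_1,x_2)=\Z\big[(x_1:c_1)\times(x_2:c_2)\big]\cong C^{c_1}(x_1)\otimes C^{c_2}(x_2)=(C^{c_1}\boxtimes C^{c_2})(x_1,x_2).$$
It then remains to check that these pointwise isomorphisms assemble into a morphism of $\H(C)$-modules, i.e. that they are natural in $(x_1,x_2)$. For a morphism $(x_1,x_2)\xto{(e_1,e_2)}(x_1e_1,x_2e_2)$ the induced map on $C^{(c_1,c_2)}$ is given by post-composition, hence sends a basis element $(d_1,d_2)$ to $(d_1e_1,d_2e_2)$; on the right-hand side the induced map is $e_{1*}\otimes e_{2*}$, sending $d_1\otimes d_2$ to $d_1e_1\otimes d_2e_2$. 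Under the chosen isomorphism these two basis elements correspond, so the square commutes and the isomorphism is natural.

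The second identity is proved by the same argument applied to the contravariant (right-module) side. Here $C_a(x)=\Z[(a:x)]$ with $(a:x)=\H(C)(x,a)$, and the factorization $\big((c_1,c_2):(x_1,x_2)\big)=(c_1:x_1)\times(c_2:x_2)$ follows in the same way from $(c_1,c_2)=(x_1,x_2)(d_1,d_2)$. The only point needing a little care is the bookkeeping of the contravariant induced maps, whose direction is reversed; but once the hom-sets are written correctly the verification of naturality is identical. I do not expect a genuine obstacle: everything reduces to the distributivity isomorphism $\Z[S]\otimes\Z[T]\cong\Z[S\times T]$ together with the product decomposition of the category $\H(C)$, and the only thing that truly must be checked is compatibility with the induced maps.
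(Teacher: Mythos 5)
Your proof is correct and follows essentially the same route as the paper: the paper's proof is exactly the pointwise computation $C^{c_1}(x_1)\otimes C^{c_2}(x_2)=\bigoplus_{(a_1,a_2)(c_1,c_2)=(x_1,x_2)}\Z=C^{(c_1,c_2)}(x_1,x_2)$, with the second identity dispatched by ``similarly.'' The only difference is that you also verify naturality of the pointwise isomorphisms explicitly, which the paper leaves implicit.
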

\begin{proof} By definition one has
\begin{align*}C^{c_1}\boxtimes C^{c_2}(x_1,x_2)&=C^{c_1}(x_1)\otimes C^{c_2}(x_2)\\
&=\left(\bigoplus_{a_1\in C_1; a_1c_1=x_1}\Z \right)\otimes  \left(\bigoplus_{a_2\in C_2; a_2c_2=x_2}\Z \right )\\
& =\bigoplus_{(a_1,a_2)(c_1,c_2)=(x_1,x_2)}\Z \\
&=C^{(c_1,c_2)}(x_1,x_2).
\end{align*}
Similarly for the second isomorphism.
\end{proof}

We have homomorphisms
$$ \iota_1:C_1\to C, \  \iota(c_1)=(c_1,1), \  \ \ \iota_2:C_2\to C, \  \iota(c_2)=(1,c_2).$$
For any left ${\H}(C)$-module $M$ we set $$M^{(1)}=\iota_1^*(M), \  \  M^{(2)}=\iota_2^*(M).$$

\begin{Le} For any left ${\H}(C)$-module $M$ one has
$$\der(C,M)\cong \der(C_1,M^{(1)})\oplus \der(C_1,M^{(2)}).$$
\end{Le}

\begin{proof} This easily follows from the fact $(c_1,c_2)=(c_1,1)(1,c_2).$
\end{proof}

We also have projections $\pi_1:C\to C_1$ and s $\pi_2:C\to C_2$, given respectively by $pi_i(c_1,c_2)=c_i$, $i=1.2$.
\begin{Le} For any left $\H(C_i)$-module $X_i$, $i=1,2$ and any  left $\H(C)$-module  $M$, one has isomorphisms
$$\Hom_{\H(C)}(\pi_1^*X_1, M)\cong \Hom_{\H(C_1)}(X_1, M^{(1)})$$
and $$  \Hom_{\H(C)}(\pi_2^*X_2, M)\cong \Hom_{\H(C_2)}(X_2, M^{(2)}).$$
\end{Le}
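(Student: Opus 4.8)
The claim is exactly that $\pi_i^*$ is left adjoint to $\iota_i^*$, since by definition $M^{(i)}=\iota_i^*(M)$. I will treat $i=1$; the argument for $i=2$ is verbatim the same. The whole proof rests on two elementary facts about the product monoid: the identity $\pi_1\circ\iota_1=\id_{C_1}$, and the factorisation $(c_1,c_2)=(c_1,1)(1,c_2)$ in $C$, which produces in $\H(C)$ a canonical morphism $(c_1,1)\xto{(1,c_2)}(c_1,c_2)$.

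First I would write down the bijection by hand. A morphism $\psi\colon X_1\to M^{(1)}$ of $\H(C_1)$-modules is a family $\psi_{c_1}\colon X_1(c_1)\to M((c_1,1))$ natural in $c_1$. From it I define $\phi\colon\pi_1^*X_1\to M$ by
$$\phi_{(c_1,c_2)}=(1,c_2)_*\circ\psi_{c_1}\colon (\pi_1^*X_1)(c_1,c_2)=X_1(c_1)\longrightarrow M((c_1,c_2)),$$
where $(1,c_2)_*$ is induced by the morphism above. Conversely, from $\phi\colon\pi_1^*X_1\to M$ I set $\psi_{c_1}=\phi_{(c_1,1)}$, which lands in $M((c_1,1))=M^{(1)}(c_1)$. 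Both assignments are additive, so it remains to check that $\phi$ is natural and that the two constructions are mutually inverse.

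The two verifications both come down to the composition law of $\H(C)$. For mutual inverseness, passing $\psi\mapsto\phi\mapsto\psi'$ gives $\psi'_{c_1}=\phi_{(c_1,1)}=(1,1)_*\circ\psi_{c_1}=\psi_{c_1}$; passing $\phi\mapsto\psi\mapsto\phi'$ gives $\phi'_{(c_1,c_2)}=(1,c_2)_*\circ\phi_{(c_1,1)}$, which equals $\phi_{(c_1,c_2)}$ by naturality of $\phi$ along $(c_1,1)\xto{(1,c_2)}(c_1,c_2)$, since $\pi_1$ carries this morphism to an identity and hence $\pi_1^*X_1$ acts by the identity on it. Naturality of $\phi$ along an arbitrary $(c_1,c_2)\xto{(d_1,d_2)}(c_1d_1,c_2d_2)$ is checked by rewriting both composites of induced maps as the single map induced by $(d_1,c_2d_2)$, using that $(1,c_2)(d_1,d_2)=(d_1,1)(1,c_2d_2)=(d_1,c_2d_2)$ together with the naturality of $\psi$.

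The only real bookkeeping --- and the point where commutativity of $C$ enters --- is identifying a composite $d_*\circ(1,c_2)_*$ with the map induced by the product $(1,c_2)d$; everything else is formal. This suggests a computation-free alternative: observe first that at the level of indexing categories $\H(\iota_1)\colon\H(C_1)\to\H(C)$ is left adjoint to $\H(\pi_1)\colon\H(C)\to\H(C_1)$. Indeed a morphism $(c_1,1)\to(x_1,x_2)$ in $\H(C)$ is an element $(d_1,x_2)$ with $c_1d_1=x_1$, so $\Hom_{\H(C)}((c_1,1),(x_1,x_2))\cong\Hom_{\H(C_1)}(c_1,x_1)=\Hom_{\H(C_1)}(c_1,\H(\pi_1)(x_1,x_2))$, which is exactly this adjunction. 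Since $\pi_1^*$ and $\iota_1^*$ are precomposition with $\H(\pi_1)$ and $\H(\iota_1)$ respectively, the standard fact that precomposition with a right adjoint is left adjoint to precomposition with the left adjoint gives $\pi_1^*\dashv\iota_1^*$ at once, which is the asserted isomorphism.
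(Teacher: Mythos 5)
Your explicit bijection is correct and is essentially the paper's own argument: both proofs restrict a natural transformation $\eta\colon\pi_1^*X_1\to M$ to its components at the objects $(c_1,1)$ and recover it from the identity $\eta_{(c_1,c_2)}=(1,c_2)_*\circ\eta_{(c_1,1)}$, which comes from naturality along $(c_1,1)\xto{(1,c_2)}(c_1,c_2)$ (a morphism that $\pi_1$ sends to an identity); you simply carry out the mutual-inverse and naturality verifications that the paper dismisses as clear. Your closing remark --- that the whole statement is the adjunction $\pi_1^*\dashv\iota_1^*$ induced by the index-category adjunction $\H(\iota_1)\dashv\H(\pi_1)$, which holds because $\Hom_{\H(C)}\bigl((c_1,1),(x_1,x_2)\bigr)\cong\Hom_{\H(C_1)}(c_1,x_1)$ naturally --- is not in the paper, is correct (including the direction: precomposition with the right adjoint $\H(\pi_1)$ is left adjoint to precomposition with $\H(\iota_1)$), and gives a computation-free proof that also explains why the factorisation $(c_1,c_2)=(c_1,1)(1,c_2)$ is the only input needed.
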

\begin{proof} Let $\eta\in \Hom_{\H(C)}(\pi_1^*X_1, M).$ Thus $\eta$ is a collection of homomorphisms of $K$-modules
$$\eta_{(a_1,a_2)}:X_{a_1}\to M_{(a_1,a_2)}$$
such that for any elements $c_1\in C_1$, $c_2\in  C_2$ the following diagram commutes
$$\xymatrix{X_{a_1}\ar[rr] ^{\eta_{(a_1,a_2)}}
\ar[d]_{c_{1*}} &&M_{(a_1,a_2)}\ar[d]^{(c_1,c_2)_*}\\
X_{a_1c_1}\ar[rr]_{\eta_{(a_1c_1,a_2c_2)}}&& M_{(a_1c_1,a_2c_2)}
}$$
it follows that $\eta_{(a_1,a_)}=(1,a_2)_*\circ \eta_{(a_1,1)}$. 
It is clear that the familly of homomorphisms $\eta_{a_1,1}$, $a_1\in C_1$ defines the morphism $\hat{\eta}\in \Hom_{\H(C_1)}(X_1, M^{(1)})$ and the previous equality shows that $\eta\mapsto \hat{\eta}$ is really a bijection.
\end{proof}

\begin{Co} If $C=C_1\times C_2$, then 
$$\Omega_C=\pi_1^*\Omega_{C_1}\oplus \pi_2^*\Omega_{C_1},$$
where $\pi_i:C\to C_i$, $i=1,2$ is the canonical projection. 
\end{Co}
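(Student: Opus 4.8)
The plan is to exhibit a universal derivation on $C=C_1\times C_2$ landing in $\pi_1^*\Omega_{C_1}\oplus\pi_2^*\Omega_{C_2}$ and to verify its universality by combining the two preceding lemmas. First I would recall that $\Omega_C$ is characterized by the natural isomorphism $\der(C,M)\cong\Hom_{\H(C)}(\Omega_C,M)$ for every left $\H(C)$-module $M$, so it suffices to produce, for the candidate module $\pi_1^*\Omega_{C_1}\oplus\pi_2^*\Omega_{C_2}$, a natural isomorphism $\Hom_{\H(C)}(\pi_1^*\Omega_{C_1}\oplus\pi_2^*\Omega_{C_2},M)\cong\der(C,M)$; Yoneda (or the uniqueness of representing objects) then forces the claimed identification of $\Omega_C$.

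The key computation chains the two lemmas just proved. Since $\Hom$ out of a direct sum splits, I would write
$$\Hom_{\H(C)}(\pi_1^*\Omega_{C_1}\oplus\pi_2^*\Omega_{C_2},M)\cong\Hom_{\H(C)}(\pi_1^*\Omega_{C_1},M)\oplus\Hom_{\H(C)}(\pi_2^*\Omega_{C_2},M).$$
Applying the Lemma on $\Hom_{\H(C)}(\pi_i^*X_i,M)\cong\Hom_{\H(C_i)}(X_i,M^{(i)})$ with $X_i=\Omega_{C_i}$ converts each summand into $\Hom_{\H(C_i)}(\Omega_{C_i},M^{(i)})$, which by the universal property of $\Omega_{C_i}$ equals $\der(C_i,M^{(i)})$. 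Finally the Lemma giving $\der(C,M)\cong\der(C_1,M^{(1)})\oplus\der(C_2,M^{(2)})$ identifies this direct sum with $\der(C,M)$. Reading the composite isomorphism off, all maps involved are natural in $M$, so the chain is a natural isomorphism of functors in $M$.

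I would then invoke the Yoneda lemma: two left $\H(C)$-modules representing the same functor $M\mapsto\der(C,M)$ are canonically isomorphic, so $\Omega_C\cong\pi_1^*\Omega_{C_1}\oplus\pi_2^*\Omega_{C_2}$. A minor point worth spelling out is that the isomorphism is the expected one on generators, sending $d(x_1,x_2)$ to $\pi_1^*(dx_1)\oplus\pi_2^*(dx_2)$; this matches the factorization $(x_1,x_2)=(x_1,1)(1,x_2)$ together with the Leibniz rule $d(ab)=a_*(db)+b_*(da)$, which is exactly the content underlying the direct-sum decomposition of derivations.

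The main obstacle is not any single hard estimate but bookkeeping the naturality of the chain of isomorphisms, since the claim is really an isomorphism of representing objects rather than an abstract isomorphism of groups. In particular one must check that the identifications $M^{(i)}=\iota_i^*(M)$ used in the derivation-splitting lemma and the identifications used in the $\Hom$-adjunction lemma are compatible with one another under the factorization of $(c_1,c_2)$; once naturality in $M$ is confirmed at each of the three steps, the conclusion is immediate from Yoneda and no genuine difficulty remains.
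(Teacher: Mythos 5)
Your proposal is correct and follows essentially the same route as the paper: the paper's proof is exactly the chain $\Hom_{\H(C)}(\Omega_C,M)=\der(C,M)=\der(C_1,M^{(1)})\oplus\der(C_2,M^{(2)})=\Hom_{\H(C_1)}(\Omega_{C_1},M^{(1)})\oplus\Hom_{\H(C_2)}(\Omega_{C_2},M^{(2)})=\Hom_{\H(C)}(\pi_1^*\Omega_{C_1}\oplus\pi_2^*\Omega_{C_2},M)$, concluded by Yoneda, which is your argument read in the reverse direction.
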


\begin{proof} For any $\H(C)$-module  $M$ one has
\begin{align*}
\Hom_{\H(C)}(\Omega_C,M)&=\der(C,M)\\
&=\der(C_1,M^{(1)}) \oplus \der(C_2,M^{(2)})\\
&=\Hom_{\H(C_1)}(\Omega_{C_1},M^{(1)})\oplus \Hom_{\H(C_2)}(\Omega_{C_2},M^{(2)})\\
&=\Hom_{\H(C)}(\pi_1^*\Omega_{C_1},M)\oplus \Hom_{\H(C)}(\pi_2^*\Omega_{C_2},M)\\
&=
\Hom_{\H(C)}(\pi_1^*\Omega_{C_1}\oplus \pi_2^*\Omega_{C_2},M)
\end{align*}
and the result follows from the Yoneda lemma.
\end{proof}

\section{Commutative monoid (co)homology and $\G$-modules}\label{cmc}
\subsection{$\Gamma$-modules related to monoids} Let $C$ be a commutative monoid and let $M$ be a left $\H(C)$-module. Define  a right $\Gamma$-module 
   ${\sf G}^*(C,M)$ as follows. On objects it is igiven  by
$${\sf G}^*(C,M)([n])=\prod_{(a_1,\cdots,a_n)\in C^n}M_{a_1\cdots
a_n}.$$ Thus $\eta\in {\sf G}(C,M)([n])$ is a function which assigns
to any $n$-tuple of elements $(a_1,\cdots ,a_n)$ of $C$ an element
$\eta(a_1,\cdots,a_n)\in M_{a_1\cdots a_n}$. Let  $f:[n]\to [m]$ be
a pointed map and $\xi\in {\sf G}(C,M)([m])$. Then the function
$f^*(\xi)i\in {\sf G}(C,M)([n])$ is given by
 $$f^*(\xi)(a_1,\cdots,a_n)=b_{0*}(\xi(b_1,\cdots,b_m)).$$ 
 %where $b_j=\prod _{f(i)=j}a_i, \ j=0,\cdots ,n.$ Here we used the
%convention that $b_0=1$ provided $f^{-1}(\{0\})=\{0\}$.

Quite similarly, let $N$ be  a right $\H(C)$-module. Define  left $\Gamma$-module ${\sf G}_*(C,N)$  as follows. On objects it is given by
$${\sf G}_*(C,N)([n])=\bigoplus_{(a_1,\cdots,a_n)\in C^n}N(a_1\cdots
a_n).$$
In order, to extend the definition on morphism, we let $$\iota_{(a_1,\cdots,a_n)}: N(a_1\cdots
a_n)\to {\sf G}_*(C,N)([n])$$
be the canonical inclusion. Let $f:[n]\to [m]$ be
a pointed map. Then the homomorphism
$$f_*:{\sf G}_*(C,N)([n])\to {\sf G}_*(C,N)([m])$$
is defined by
$$f_*\iota_{(a_1,\cdots,a_n)}(x)=\iota_{(b_1,\cdots,b_m)}((b_0)_*(x)),$$
where $x\in N(a_1\cdots a_n)$ and $$b_j=\prod _{f(i)=j}a_i,  \ \  \ j=0,\cdots ,n.$$ 
Here we used the convention that $b_0=1$ provided $f^{-1}(\{0\})=\{0\}$.

\begin{Le}\label{3.1} Let $C=\mathbb{N}$ be a free commutative monoid with a generaor $t$, and let $C_n$ be the standard projective right $\H(C)$-module, $n\geq 0$, see Section \ref{t}. Then one has an isomorphism of left $\Gamma$-modules
$${\sf G}_*(C,C_n)=\bigoplus_{k=0}^nS^k\circ \Gamma^1$$
In particular, ${\sf G}_*(C,C_n)$ is ${\sf Y}$-projective.
\end{Le}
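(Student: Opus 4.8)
The plan is to first make the left $\Gamma$-module ${\sf G}_*(\mathbb{N},C_n)$ completely explicit and then recognize it as a symmetric power of $\Gamma^1$. Writing the elements of $C=\mathbb{N}$ as $t^j$, $j\geq 0$, and unwinding the definition, an $m$-tuple $(a_1,\dots,a_m)=(t^{j_1},\dots,t^{j_m})$ has product $t^{j_1+\cdots+j_m}$, and $C_n(t^s)=\Z$ for $s\le n$ and $0$ otherwise. Hence
$$ {\sf G}_*(\mathbb{N},C_n)([m])=\bigoplus_{\substack{(j_1,\dots,j_m)\in\mathbb{N}^m\\ j_1+\cdots+j_m\leq n}}\Z,$$
a free abelian group with one generator $e_{(j_1,\dots,j_m)}$ for each monomial $x_1^{j_1}\cdots x_m^{j_m}$ of degree at most $n$ in $m$ variables. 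For a pointed map $f:[m]\to[p]$ the structure map sends $e_{(j_1,\dots,j_m)}$ to $e_{(k_1,\dots,k_p)}$ with $k_l=\sum_{f(i)=l}j_i$; the indices $i$ with $f(i)=0$ contribute only to the ``$b_0$''-factor, on which the structure map of $C_n$ is the identity, so these variables are simply dropped and the total degree may decrease.

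The key step is to read this as the functoriality of a symmetric power. Introduce a slack variable $x_0$ (the basepoint coordinate) and attach to each generator $e_{(j_1,\dots,j_m)}$ the monomial $x_0^{\,n-(j_1+\cdots+j_m)}x_1^{j_1}\cdots x_m^{j_m}$ of degree exactly $n$ in $x_0,x_1,\dots,x_m$. This is a bijection between the chosen basis of ${\sf G}_*(\mathbb{N},C_n)([m])$ and the monomial basis of $S^n\big(\Gamma^1([m])\big)$, where $\Gamma^1([m])$ is the free module on $[m]=\{0,1,\dots,m\}$. I would then check that this bijection is $\Gamma$-equivariant: a pointed map $f$ acts on $S^n\Gamma^1$ by the substitution $x_i\mapsto x_{f(i)}$ (with $x_0\mapsto x_0$), and, comparing with the formula above, the power of $x_0$ automatically absorbs exactly the variables sent to the basepoint, so that the non-$x_0$ part reproduces $e_{(k_1,\dots,k_p)}$. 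This yields a natural isomorphism ${\sf G}_*(\mathbb{N},C_n)\cong S^n\circ\Gamma^1$.

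Finally I would convert $S^n\Gamma^1$ into the stated direct sum. The augmentation $\epsilon:\Gamma^1\to\underline\Z$, $x_i\mapsto 1$, is split by $\underline\Z\to\Gamma^1$, $1\mapsto x_0$, and its kernel is naturally isomorphic to the reduced functor $\bar\Gamma^1$ (via $x_i\mapsto x_i-x_0$); thus $\Gamma^1\cong\underline\Z\oplus\bar\Gamma^1$ as $\Gamma$-modules. Applying the natural decomposition of the symmetric power of a direct sum and using $S^{j}(\underline\Z)\cong\underline\Z$ gives
$$ S^n\Gamma^1\cong\bigoplus_{k=0}^n S^{n-k}(\underline\Z)\otimes S^k(\bar\Gamma^1)\cong\bigoplus_{k=0}^n S^k\circ\Gamma^1,$$
which is the asserted formula (with $\Gamma^1$ read as the reduced functor). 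Since each $S^k\Gamma^1$ is $\Y$-projective by the fact recalled from \cite{aq}, and $\Y$-projectivity is preserved by finite direct sums (and by direct summands), ${\sf G}_*(\mathbb{N},C_n)$ is $\Y$-projective.

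The main obstacle is the verification of $\Gamma$-equivariance in the second step: one must track the basepoint bookkeeping in the definition of ${\sf G}_*$ (the role of $b_0$ and the convention $b_0=1$) and confirm it matches the basepoint coordinate $x_0$ in the symmetric-power substitution. One must also be careful that the ``$\Gamma^1$'' appearing in the decomposition is the reduced functor rather than the full representable, since the single module $S^n\Gamma^1$ already accounts for all the summands $S^k\bar\Gamma^1$, $0\le k\le n$, at once.
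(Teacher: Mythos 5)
Your proposal is correct and follows essentially the same route as the paper's proof: both identify ${\sf G}_*(\mathbb{N},C_n)([m])$ with the free module on the monomials of degree at most $n$ in $x_1,\dots,x_m$ and match these against the monomial bases of the summands $S^k\Gamma^1([m])$. Your detour through $S^n$ of the unreduced representable, with the slack variable $x_0$ absorbing the basepoint bookkeeping, supplies the naturality check that the paper leaves implicit, and you are right that the $\Gamma^1$ in the statement must be read as the reduced functor --- the paper's proof tacitly does the same, listing only $x_1,\dots,x_m$ as a basis of $\Gamma^1([m])$.
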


\begin{proof} Since $\Gamma^1([m])$ is a free $K$-module spanned on $x_1,\cdots, x_m$, it follows that $S^k\circ \Gamma^1([m])$ is a free $K$-module spanned by all monomials of degree $k$ on the variables $x_1,\cdots, x_m$. On the other hand we have
$${\sf G}_*(C,C_n)([m])=\bigoplus_{k=0}^n\bigoplus_{n_1+\cdots+n_m=k}\Z.$$
To see the expected isomorphism, it is enough to  assigne to  a basis element of $\bigoplus_{n_1+\cdots+n_k=m}\Z$ the monomial $x_1^{n_1}\cdots x_m^{n_m}$.
\end{proof}

\begin{Le}\label{3.2} Let $C=C_1\times C_2$ be product of two monoids and $N_i$ be right $\H(C_i)$ modules, $i=1,2$. Then one has
$${\sf G}_*(C,N_1\boxtimes N_2)={\sf G}_*(C_1,N_1) \otimes {\sf G}_*(C_2, N_2).$$
\end{Le}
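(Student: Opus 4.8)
The plan is to verify the claimed isomorphism
$${\sf G}_*(C,N_1\boxtimes N_2)={\sf G}_*(C_1,N_1)\otimes {\sf G}_*(C_2,N_2)$$
directly on objects and then check compatibility with the $\Gamma$-action. First I would evaluate both sides on $[n]$. Using the definition of ${\sf G}_*$ and of $\boxtimes$, the left-hand side is
$$\bigoplus_{(a_1,\cdots,a_n)\in C^n}(N_1\boxtimes N_2)(a_1\cdots a_n),$$
and writing each $a_i=(a_i^{(1)},a_i^{(2)})\in C_1\times C_2$, the product $a_1\cdots a_n$ has first coordinate $a_1^{(1)}\cdots a_n^{(1)}$ and second coordinate $a_1^{(2)}\cdots a_n^{(2)}$, so $(N_1\boxtimes N_2)(a_1\cdots a_n)=N_1(a_1^{(1)}\cdots a_n^{(1)})\otimes N_2(a_1^{(2)}\cdots a_n^{(2)})$. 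Since an $n$-tuple in $C^n$ is the same thing as a pair of $n$-tuples, one in $C_1^n$ and one in $C_2^n$, the indexing set factors as $C^n=C_1^n\times C_2^n$. The key algebraic fact I would invoke is that tensor product commutes with direct sums, giving
$$\bigoplus_{\substack{(a^{(1)})\in C_1^n\\ (a^{(2)})\in C_2^n}}N_1(a_1^{(1)}\cdots a_n^{(1)})\otimes N_2(a_1^{(2)}\cdots a_n^{(2)})\cong\left(\bigoplus_{C_1^n}N_1(\cdots)\right)\otimes\left(\bigoplus_{C_2^n}N_2(\cdots)\right),$$
whose right-hand side is exactly ${\sf G}_*(C_1,N_1)([n])\otimes {\sf G}_*(C_2,N_2)([n])$. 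This establishes the identification at the level of underlying $K$-modules for each $n$.

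The second step, and the only one requiring genuine care, is to check that this pointwise isomorphism is natural in $[n]$, i.e.\ that it is an isomorphism of left $\Gamma$-modules rather than merely of graded $K$-modules. For a pointed map $f:[n]\to[m]$ I would trace through the induced map on each side. On ${\sf G}_*(C,N_1\boxtimes N_2)$ the recipe sends $\iota_{(a_1,\cdots,a_n)}(x)$ to $\iota_{(b_1,\cdots,b_m)}((b_0)_*(x))$ with $b_j=\prod_{f(i)=j}a_i$. The crucial observation is that forming these products is done coordinatewise in $C=C_1\times C_2$: if we write $b_j=(b_j^{(1)},b_j^{(2)})$ then $b_j^{(1)}=\prod_{f(i)=j}a_i^{(1)}$ and $b_j^{(2)}=\prod_{f(i)=j}a_i^{(2)}$, which are precisely the products defining the $\Gamma$-action on ${\sf G}_*(C_1,N_1)$ and ${\sf G}_*(C_2,N_2)$ separately. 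Likewise the structure map $(b_0)_*$ of $N_1\boxtimes N_2$ factors as $(b_0^{(1)})_*\otimes(b_0^{(2)})_*$ by the definition of $\boxtimes$ together with the componentwise description of morphisms in $\H(C)=\H(C_1)\times\H(C_2)$. Under the tensor-decomposition isomorphism the pure tensor $x\mapsto x_1\otimes x_2$ therefore maps to the tensor product of the two separate images, which is exactly the action of $f_*\otimes f_*$ on the right-hand side.

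The main obstacle, such as it is, lies entirely in the bookkeeping of the convention $b_0=1$ when $f^{-1}(\{0\})=\{0\}$ and in making sure the $b_0$-factor behaves correctly under the tensor factorization. Since $(1,1)$ is the unit of $C=C_1\times C_2$ and its components are the units of $C_1$ and $C_2$, the convention is consistent coordinatewise, so $(b_0)_*$ reduces to the identity on both factors precisely when it does on each component; there is no genuine difficulty here, only the need to state the convention compatibly. I expect no further subtleties: once the indexing bijection $C^n\cong C_1^n\times C_2^n$ and the coordinatewise nature of both the products $b_j$ and the structure maps are recorded, the commutativity of the relevant square is immediate, and the two steps together yield the desired isomorphism of left $\Gamma$-modules.
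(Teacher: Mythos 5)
Your proof is correct, and it is exactly the direct verification the paper has in mind: the paper simply states ``The proof is straightforward,'' and your argument --- the indexing bijection $C^n\cong C_1^n\times C_2^n$, the distribution of tensor product over direct sums, and the coordinatewise computation of the $b_j$ and of $(b_0)_*$ --- is that straightforward check written out in full.
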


\noindent The proof is straightforward.

\begin{Co}\label{coproj} Let $C$ be  a finitely generated free commutative monoid and let 
$N$ be a projective object in the category of right $\H(C)$-modules. Then ${\sf G}_*(C,N)$ is a ${\sf Y}$-projective left $\Gamma$-module.
\end{Co}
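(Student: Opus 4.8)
The plan is to reduce the statement to the standard projective modules $C_a$ and then to feed these into the tensor formula of Lemma \ref{3.2}. Since $C$ is a finitely generated free commutative monoid, we may write $C=C_1\times\cdots\times C_r$ with each $C_i=\mathbb{N}$. The first observation I would record is that ${\sf G}_*(C,-)$ preserves arbitrary direct sums: this is immediate from the defining formula ${\sf G}_*(C,N)([n])=\bigoplus_{(a_1,\dots,a_n)\in C^n}N(a_1\cdots a_n)$, since both evaluation of $N$ and the outer coproduct commute with direct sums. In particular ${\sf G}_*(C,-)$ carries coproducts to coproducts and direct summands to direct summands.

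Because the modules $C_a$, $a\in C$, are a family of projective generators of $mod$-$\H(C)$, the given projective $N$ is a direct summand of some coproduct $\bigoplus_j C_{a^{(j)}}$. Applying ${\sf G}_*(C,-)$ and using the previous paragraph, ${\sf G}_*(C,N)$ becomes a direct summand of $\bigoplus_j {\sf G}_*(C,C_{a^{(j)}})$. A direct sum of ${\sf Y}$-projective $\Gamma$-modules is again ${\sf Y}$-projective, since $\Hom_\Gamma(\bigoplus_j F_j,-)=\prod_j\Hom_\Gamma(F_j,-)$ and a product of exact sequences is exact; and a direct summand of a ${\sf Y}$-projective is clearly ${\sf Y}$-projective. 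Hence it suffices to treat $N=C_a$ for a single $a=(a_1,\dots,a_r)$.

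For such an $a$, iterating Lemma \ref{2.4} gives $C_a=C_{a_1}\boxtimes\cdots\boxtimes C_{a_r}$ with $C_{a_i}$ the standard projective right $\H(\mathbb{N})$-module, and iterating Lemma \ref{3.2} yields
\[
{\sf G}_*(C,C_a)\cong {\sf G}_*(\mathbb{N},C_{a_1})\otimes\cdots\otimes {\sf G}_*(\mathbb{N},C_{a_r}).
\]
By Lemma \ref{3.1} the $i$-th factor is $\bigoplus_{k=0}^{a_i}S^k\circ\Gamma^1$, so distributing the tensor product over these finite direct sums exhibits ${\sf G}_*(C,C_a)$ as a finite direct sum of $\Gamma$-modules of the shape $S^{\lambda_1}\Gamma^1\otimes\cdots\otimes S^{\lambda_r}\Gamma^1$ for partitions $\lambda$. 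By the additivity just used, the whole problem collapses to showing that each such product of symmetric powers is ${\sf Y}$-projective.

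This last reduction is the crux, since the excerpt only records ${\sf Y}$-projectivity of a single $S^n\Gamma^1$; the hard part is to promote it to products. The mechanism I would use is the isomorphism $S^{\lambda_1}\Gamma^1\otimes\cdots\otimes S^{\lambda_r}\Gamma^1\cong(\Gamma^{s(\lambda)})_{\Sigma(\lambda)}$, the $\Sigma(\lambda)$-coinvariants of $\Gamma^{s(\lambda)}$, which comes from $\Gamma^{n}\cong(\Gamma^1)^{\otimes n}$ together with $S^{m}\Gamma^1\cong(\Gamma^1)^{\otimes m}/\Sigma_{m}$ and the identification of $\Sigma(\lambda)$ with the Young subgroup of $\Sigma_{s(\lambda)}=Aut_\G([s(\lambda)])$. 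Combined with the Yoneda isomorphism $\Hom_\Gamma(\Gamma^{s(\lambda)},F)\cong F([s(\lambda)])$ and the coinvariants/invariants adjunction, this gives, for every left $\Gamma$-module $F$,
\[
\Hom_\Gamma\bigl(S^{\lambda_1}\Gamma^1\otimes\cdots\otimes S^{\lambda_r}\Gamma^1,\,F\bigr)\cong F([s(\lambda)])^{\Sigma(\lambda)}.
\]
For a ${\sf Y}$-exact sequence $0\to F_1\to F\to F_2\to 0$ the induced sequence of $\Sigma(\lambda)$-invariants is automatically left exact, and it is right exact precisely because ${\sf Y}$-exactness was defined by the surjectivity of $F([n])^{\Sigma(\lambda)}\to F_2([n])^{\Sigma(\lambda)}$ for all partitions $\lambda$. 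Thus $S^{\lambda_1}\Gamma^1\otimes\cdots\otimes S^{\lambda_r}\Gamma^1$ is ${\sf Y}$-projective, completing the proof; alternatively one may simply cite \cite{aq}, where these products of symmetric powers are shown to be ${\sf Y}$-projective.
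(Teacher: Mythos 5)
Your argument is correct and follows the paper's own route: the same reduction of $N$ to the standard projectives $C_a$ via retracts of direct sums, and the same chain Lemma \ref{2.4} $\to$ Lemma \ref{3.2} $\to$ Lemma \ref{3.1}. The one place you diverge is the final step. The paper runs an induction on the number $k$ of generators of $C$ and closes it by citing from \cite{aq} the fact that a tensor product of two ${\sf Y}$-projective left $\Gamma$-modules is again ${\sf Y}$-projective; you instead distribute the tensor product over the finite direct sums completely, land on summands of the form $S^{\lambda_1}\Gamma^1\otimes\cdots\otimes S^{\lambda_r}\Gamma^1$, and verify their ${\sf Y}$-projectivity directly through the identification with the coinvariants $(\Gamma^{s(\lambda)})_{\Sigma(\lambda)}$ and the resulting natural isomorphism $\Hom_\Gamma\bigl(S^{\lambda_1}\Gamma^1\otimes\cdots\otimes S^{\lambda_r}\Gamma^1,F\bigr)\cong F([s(\lambda)])^{\Sigma(\lambda)}$, which matches the definition of ${\sf Y}$-exactness on the nose. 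That computation is exactly the content of the cited fact in \cite{aq}, so what you gain is a self-contained proof that makes transparent why the class of partitions appears in the definition of ${\sf Y}$-exact sequences; what the paper's version gains is brevity and a statement (closure of ${\sf Y}$-projectives under tensor product) that is reusable beyond this particular corollary. Your preliminary observations --- that ${\sf G}_*(C,-)$ preserves direct sums and that direct sums and retracts of ${\sf Y}$-projectives are ${\sf Y}$-projective --- are correct and are tacitly assumed in the paper's one-line reduction.
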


\begin{proof} Since, any projective object is a retract of a direct sum of standard projective modules $C^c$, it is enough to restrict ourself with the case when $C=C^c$. Assume $C=\mathbb{N}^k$. We will work by induction on $k$. If $k=1$, then the result was already  established, see Lemma \ref{3.1}. Rest follows from  Lemma 2.4 and Lemma \ref{3.2} and the fact that tensor product of two ${\sf Y}$-projective objects is ${\sf Y}$-projective see \cite{aq}.
\end{proof}

%\begin{Le} Let $C=C_1\times C_2$ and let $N_i$ be a 
%right $\H(C)$-module, $i=1,2$. Then
%\end{Le}

\subsection{Homology and cohomology  of commutative monoids} 
 Let $\bf CM$
be the category of all commutative monoids and let ${\bf SCM}$ be
the category of all simplicial commutative monoids. There is a
forgetful functor $U':{\bf SCM} \to \bf SS$. By \cite{ha} there is a
unique closed model category structure on the category $\bf SCM$
such that a morphism $f:X_*\to Y_*$ of $\bf SCM$ is  a weak
equivalence (resp. fibration) if $U'(f)$ is a weak equivalence
(resp. fibration) of simplicial sets. A simplicial commutative
monoid $X_*$ is called \emph{free} if each $X_n$ is a free
commutative monoid with a base $Y_n$, such that degeneracy operators
$s_i:X_n\to X_{n+1}$ maps $Y_n$ to $Y_{n+1}$, $0\leq i\leq n$.
According to \cite{ha} any free simplicial commutative monoid is
cofibrant and any cofibrant object is a retract of a free simplicial
commutative monoid.

If $C'\to C$ is a morphism of commutative monoids then it gives rise
to a functor ${\H}(C')\to {\H}(C)$, which allows us to consider
any left or right $\H(C)$-module as a module over $\H(C')$. In particular 
if $P_*\to C$ is an augmented
simplicial monoid and $M$ is a left $\H(C)$-module, one
 can considered $M$ as a left 
$\H(P_k)$-module, for all $k\geq 0$. The same holds for right $\H(C)$-modules.

Let  $M$ be a left $\H(C)$-module.   Then the Grillet
 cohomology of $C$ with coefficients
in $M$ is defined by
$${\sf D}^*(C,M):=H^*(C^*(\der({P_*},M))),$$
where $P_*\to C$ is a cofibrant replacement of the monoid $C$
 considered as a constant simplicial monoid.
 
 Let  $N$ be a right $\H(C)$-module.   Then the Grillet
 homology of $C$ with coefficients
in $N$ is defined by
$${\sf D}_*(C,N):=H_*(C_*(\Omega_{P_*}\otimes_{\H(P_*)}N))),$$
where $P_*\to C$ is a cofibrant replacement of the monoid $C$
 considered as a constant simplicial monoid.

The  definition of the cohomology essentially goes back to Grillet (see
\cite{gr1}-\cite{gr4}), but the definition of the Grillet homology is new.

 By comparing the definition we obtain  the following basic fact, which 
is missing  in (see \cite{gr1}-\cite{gr4}).
\begin{Le} Let $C$ be a commutative monoid and $A$ be a $K[C]$-module.
Then one has the isomorphisms:
$${\sf D}^*(C,j^*(A))\cong {\sf D}^*(K[C],A),$$
$${\sf D}_*(C,j^*(A))\cong {\sf D}_*(K[C],A).$$
\end{Le}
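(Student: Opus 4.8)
The plan is to use the monoid‑algebra functor $C\mapsto K[C]$ as a bridge between the two cofibrant replacements occurring in the definitions of $\sf D^*,\sf D_*$. First I would recall that $C\mapsto K[C]$ is left adjoint to the functor sending a commutative $K$‑algebra to its multiplicative monoid; consequently it preserves all colimits and sends the free commutative monoid on a set $S$ to the polynomial algebra on $S$, which is a free commutative $K$‑algebra. Prolonging $K[-]$ levelwise to simplicial objects, it therefore carries a free simplicial commutative monoid to a free simplicial commutative ring: if each $P_n$ is free on $Y_n$ with $s_i(Y_n)\subseteq Y_{n+1}$, then $K[P_n]$ is the polynomial ring on $Y_n$ and $K[s_i]$ carries the polynomial generators indexed by $Y_n$ to those indexed by $Y_{n+1}$. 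Hence, if $P_*\to C$ is a cofibrant replacement in ${\bf SCM}$, then $K[P_*]$ is cofibrant in ${\bf SCR}$.

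The crux of the argument is to show that $K[P_*]\to K[C]$ is moreover a weak equivalence, so that it is a cofibrant replacement of $K[C]$ and may be used to compute Andr\'e--Quillen (co)homology. The underlying simplicial $K$‑module of $K[P_*]$ is the free simplicial $K$‑module on the underlying simplicial set $U'(P_*)$, whence $\pi_n(K[P_*])\cong H_n(U'(P_*);K)$. Since $P_*\to C$ is a weak equivalence of simplicial sets and weak equivalences induce isomorphisms on homology with $K$‑coefficients, the map $K[P_*]\to K[C]$ is a $\pi_*$‑isomorphism of simplicial $K$‑modules, hence a weak equivalence of the underlying simplicial sets, i.e.\ a weak equivalence in ${\bf SCR}$. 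This is the point I expect to require the most care to justify cleanly; everything after it is a levelwise application of Lemma \ref{22} together with the adjunction $(j_*,j^*)$, plus routine naturality checks.

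For the cohomology statement I would apply the computation carried out inside the proof of Lemma \ref{22}, but to the monoid $P_k$ in place of $C$: for every $k$ one has $\der(P_k,j^*(A))\cong\der(K[P_k],A)$, where on the left $j^*(A)$ is restricted along $\H(P_k)\to\H(C)$ and on the right $A$ is restricted along $K[P_k]\to K[C]$. A short check shows that these two restrictions of the coefficients agree, namely that the $\H(P_k)$‑module obtained from $j^*(A)$ by restriction is exactly $j^*$ of $A$ regarded as a $K[P_k]$‑module. These isomorphisms are natural in the monoid, so they assemble into an isomorphism of cosimplicial abelian groups $\der(P_*,j^*(A))\cong\der(K[P_*],A)$; passing to the associated cochain complexes and taking cohomology yields ${\sf D}^*(C,j^*(A))\cong{\sf D}^*(K[C],A)$.

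For the homology statement (where $j^*(A)$ is understood as the associated right $\H(C)$‑module, which is legitimate since $K[C]$ is commutative) I would first establish a projection‑type formula: for a monoid $D$, a left $\H(D)$‑module $L$ and a $K[D]$‑module $A$ one has $j^*(A)\otimes_{\H(D)}L\cong A\otimes_{K[D]}j_*(L)$, naturally in $L$. Both functors of $L$ are right exact and commute with direct sums, so it suffices to check them on the projective generators $C^a$, where both reduce to $A$ (using $N\otimes_{\H(D)}C^a\cong N(a)$ on one side and $j_*(C^a)\cong K[D]$ on the other). Taking $L=\Omega_{P_k}$ and invoking Lemma \ref{22} applied to $P_k$ gives $\Omega_{P_k}\otimes_{\H(P_k)}j^*(A)\cong A\otimes_{K[P_k]}\Omega^1_{K[P_k]}$ for every $k$; naturality in $k$ promotes this to an isomorphism of simplicial abelian groups, and applying $C_*$ and taking homology yields ${\sf D}_*(C,j^*(A))\cong{\sf D}_*(K[C],A)$.
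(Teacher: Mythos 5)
Your proposal is correct and follows essentially the same route as the paper: the paper's (very terse) proof rests on exactly the two points you develop, namely that $K[P_*]\to K[C]$ is a cofibrant replacement in $\bf SCR$ whenever $P_*\to C$ is one in $\bf SCM$, and the levelwise identification of derivations/differentials coming from Lemma \ref{22}. You merely supply the details the paper omits (freeness of $K[P_*]$, the weak-equivalence argument via homology of free simplicial $K$-modules, and the projection formula $j^*(A)\otimes_{\H(D)}L\cong A\otimes_{K[D]}j_*(L)$), all of which are sound.
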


\begin{proof} The isomorphism in the dimension zero is obvios one, compare with Lemma \ref{22}. Rest
follows from the fact that if $P_*\to C$ is a cofibrant replacement
of $C$ in the category $\bf SCM$, then $K[P_*]\to K[C]$ is a
cofibration replacement of $K[C]$ in the category $\bf SCR$.
\end{proof}

\subsection{The main Theorem}
Now we are in the situation to state our main theorem, which relates Grillet (co)homology of the monoid $M$ with the Andre-Quillen (co)homology of the $\Ga$-modules ${\sf G}_*(C,N)$ and ${\sf G}^*(C,M)$.

\begin{Th}\label{grAQ} Let $C$ be a commutative monoid, $M$ be a left and $N$ be a right $\H(C)$-modules. Then one has the following isomorphisms
$${\sf D}^*(C,M)={\pi_\Y}^*({\sf G}^*(C,M)),$$
$${\sf D}_*(C,M)={\pi^\Y}_*({\sf G}_*(C,N)).$$
\end{Th}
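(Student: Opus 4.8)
The plan is to prove both isomorphisms by the standard device of comparing two sequences of functors via their defining axioms, reducing everything to the free case already handled in Corollary \ref{coproj}. I will concentrate on the homological statement ${\sf D}_*(C,N)={\pi^\Y}_*({\sf G}_*(C,N))$; the cohomological case is dual.

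First I would fix a cofibrant replacement $P_*\to C$ in $\bf SCM$, so that each $P_k$ is a (finitely generated, up to a filtered colimit argument) free commutative monoid, and regard the right $\H(C)$-module $N$ as a right $\H(P_k)$-module for every $k$ via the functors $\H(P_k)\to \H(C)$. By definition ${\sf D}_*(C,N)$ is the homology of the simplicial abelian group $[k]\mapsto \Omega_{P_k}\otimes_{\H(P_k)} N$. The first key step is to rewrite this in $\Gamma$-module terms: I would establish a natural isomorphism
$$\Omega_{P_k}\otimes_{\H(P_k)} N \;\cong\; HH_1\bigl({\sf G}_*(P_k,N)\bigr),$$
so that the whole simplicial object computing ${\sf D}_*(C,N)$ is obtained by applying $HH_1$ to the simplicial $\Gamma$-module $[k]\mapsto {\sf G}_*(P_k,N)$. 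This identification in degree zero is really the content of Lemma \ref{hh1} combined with the universal property of $\Omega$; I would check that the boundary map $\partial:{\sf G}_*(C,N)([2])\to {\sf G}_*(C,N)([1])$ precisely imposes the derivation (Leibniz) relation defining $\Omega_C$, so that its cokernel is $\Omega_C\otimes_{\H(C)}N$ after the tensor identities $C_a\otimes_{\H(C)}M\cong M(a)$ recorded earlier.

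Next I would invoke the comparison of derived functors. The functor $N\mapsto {\sf G}_*(C,N)$ is exact and, by Corollary \ref{coproj}, carries projective $\H(C)$-modules (for $C$ free finitely generated) to $\Y$-projective $\Gamma$-modules. Hence for a free simplicial monoid $P_*$ with projective coefficients, the simplicial $\Gamma$-module ${\sf G}_*(P_*,N)$ is degreewise $\Y$-projective, and applying $HH_1$ degreewise and taking homology computes exactly the relative left derived functors ${\pi^\Y}_*$, by the uniqueness axioms (i)-(iii) of ${\pi^\Y}_*$ stated in the Lemma following the definition of $\Y$-exactness. Since ${\pi^\Y}_0=HH_1$ and the higher ${\pi^\Y}_n$ vanish on $\Y$-projectives, the homology of $[k]\mapsto HH_1({\sf G}_*(P_k,N))$ agrees with ${\pi^\Y}_*({\sf G}_*(C,N))$; this is the monoid analogue of the main theorem of \cite{aq}, and I would cite that theorem for the formal bookkeeping of relative derived functors along a cofibrant (simplicial) resolution.

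\textbf{The main obstacle.}
The delicate point is the passage between a \emph{cofibrant simplicial resolution of the monoid} and a \emph{$\Y$-projective resolution of the $\Gamma$-module}. One must verify that ${\sf G}_*(-,N)$ sends the cofibration $P_*\to C$ to something computing the relative derived functor, i.e.\ that degreewise $\Y$-projectivity (Corollary \ref{coproj}) together with the correct augmentation yields a genuine $\Y$-projective resolution of ${\sf G}_*(C,N)$ in the derived sense. This requires a spectral-sequence or double-complex argument comparing the simplicial direction (resolving $C$) with the homological direction (the functor $HH_1$ and its derived functors), and checking that the edge maps degenerate so that the total homology is ${\pi^\Y}_*({\sf G}_*(C,N))$. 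I would handle this by a careful application of the acyclic-models / comparison theorem for the proper class of $\Y$-exact sequences, exactly as in \cite{aq}, noting that the finitely generated hypothesis in Corollary \ref{coproj} is arranged for in each simplicial degree of a free resolution.
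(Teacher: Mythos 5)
Your overall strategy coincides with the paper's (identify $HH_1({\sf G}_*(P_k,N))$ with $\Omega_{P_k}\otimes_{\H(P_k)}N$, settle the free case, then feed a free simplicial resolution through a comparison of resolutions), but there is a genuine gap at the central step. You assert that for a free simplicial monoid $P_*$ ``with projective coefficients'' the $\Gamma$-modules ${\sf G}_*(P_k,N)$ are degreewise $\Y$-projective, quoting Corollary \ref{coproj}. The theorem, however, is stated for an \emph{arbitrary} right $\H(C)$-module $N$, and the module fed into ${\sf G}_*(P_k,-)$ is the restriction of $N$ along $\H(P_k)\to\H(C)$; this restriction is essentially never projective over $\H(P_k)$, so Corollary \ref{coproj} does not apply and degreewise $\Y$-projectivity fails. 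What is actually needed, and what the paper proves, is the weaker but sufficient statement that ${\sf G}_*(P,N)$ is ${\pi^\Y}_*$-\emph{acyclic} for every free commutative monoid $P$ and \emph{every} right $\H(P)$-module $N$. This is obtained by a dimension-shifting induction that your proposal omits: Lemma \ref{sizuste} shows that ${\sf G}_*(C,-)$ turns short exact sequences of coefficients into $\Y$-exact sequences of $\Gamma$-modules, hence produces long exact sequences in ${\pi^\Y}_*$; choosing a presentation $0\to N_1\to F\to N\to 0$ with $F$ projective and using that ${\pi^\Y}_0({\sf G}_*(P,-))=-\otimes_{\H(P)}\Omega_P$ is exact for free $P$, one kills ${\pi^\Y}_i$ for $i>0$ by induction on $i$. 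Mere exactness of $N\mapsto{\sf G}_*(C,N)$, which is all you invoke, does not yield these long exact sequences in the relative ($\Y$-proper) setting.

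Two smaller points. First, once acyclicity is in hand, the ``main obstacle'' you describe is exactly Lemma \ref{rezolventa}: the chain complex of ${\sf G}_*(P_*,N)$ is a $\Y$-resolution of ${\sf G}_*(C,N)$ by acyclic (not projective) objects, and the standard argument for computing relative derived functors from acyclic resolutions finishes the proof; no further spectral-sequence or acyclic-models input is required. Second, your remark that finite generation ``is arranged for in each simplicial degree of a free resolution'' is not accurate for infinite monoids; the passage from finitely generated free monoids (where Corollary \ref{coproj} and Lemmas \ref{2.4}, \ref{3.2} apply) to arbitrary free ones requires a filtered-colimit argument that should be made explicit rather than assumed.
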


The proof is based on several steps. The idea is to reduce the theorem to the case when $M$ is a free commutative monoid with one generators. In this case, theorem is proved using direct computation. We give proof only for homology, a dual argument works for cohomology.

We need some lemmata.
\begin{Le}\label{nuli}
Let $C$ be  a commutative monoid,  $N$ be a right $\H(C)$-module. Then  one has  natural isomorphisms
$$HH_1({\sf G}_*(C,N))\cong N\t_{\H(C)}\Omega_{C}.$$
\end{Le}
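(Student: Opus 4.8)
The plan is to compute both sides explicitly as quotients of the abelian group $\bigoplus_{a\in C}N(a)$ and to verify that the two subgroups one divides out by coincide.

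First I would compute the left-hand side using Lemma \ref{hh1}, which gives $HH_1({\sf G}_*(C,N))=\Coker(\partial\colon {\sf G}_*(C,N)([2])\to {\sf G}_*(C,N)([1]))$ with $\partial=\epsilon^0_*-\epsilon^1_*+\epsilon^2_*$. Spelling out the three pointed maps $\epsilon^i\colon[2]\to[1]$ from their defining formula one finds $\epsilon^0\colon 1\mapsto 0,\,2\mapsto 1$; $\epsilon^1\colon 1\mapsto 1,\,2\mapsto 1$; $\epsilon^2\colon 1\mapsto 1,\,2\mapsto 0$. Substituting these into the formula for $f_*$ on ${\sf G}_*(C,N)$, and using ${\sf G}_*(C,N)([1])=\bigoplus_{a}N(a)$ together with the $b_0$-convention, I obtain for $x\in N(a_1a_2)$ the identity
$$\partial\,\iota_{(a_1,a_2)}(x)=\iota_{a_2}(a_1^*x)-\iota_{a_1a_2}(x)+\iota_{a_1}(a_2^*x),$$
where $a_1^*,a_2^*$ are the structure maps of the right module $N$. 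Thus $HH_1({\sf G}_*(C,N))$ is $\bigoplus_a N(a)$ modulo the subgroup generated by these elements, as $(a_1,a_2)$ runs over $C\times C$ and $x$ over $N(a_1a_2)$.

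Next I would compute the right-hand side from a presentation of $\Omega_C$. Since $\Omega_C$ is by definition the left $\H(C)$-module on generators $da\in\Omega_C(a)$ modulo the relations $d(ab)=a_*(db)+b_*(da)$, and $C^a$ is the free $\H(C)$-module on one generator in degree $a$, there is a presentation $\bigoplus_{(a,b)}C^{ab}\to\bigoplus_a C^a\to\Omega_C\to 0$. Applying the right-exact functor $N\otimes_{\H(C)}(-)$ together with the identification $N\otimes_{\H(C)}C^a\cong N(a)$ turns this into $\bigoplus_{(a,b)}N(ab)\to\bigoplus_a N(a)\to N\otimes_{\H(C)}\Omega_C\to 0$. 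Using the balancing relation $c^*(z)\otimes y=z\otimes c_*(y)$ to carry structure maps across the tensor, the image of the first arrow is generated, for $x\in N(ab)$, by $\iota_{ab}(x)-\iota_b(a^*x)-\iota_a(b^*x)$, which encodes $x\otimes d(ab)=a^*(x)\otimes db+b^*(x)\otimes da$.

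Comparing the two computations, the generator $\iota_{a_1a_2}(x)-\iota_{a_2}(a_1^*x)-\iota_{a_1}(a_2^*x)$ of the relation subgroup for $N\otimes_{\H(C)}\Omega_C$ is exactly $-\partial\,\iota_{(a_1,a_2)}(x)$. Hence the two subgroups of $\bigoplus_a N(a)$ coincide, so the identity map of $\bigoplus_a N(a)$ descends to the asserted isomorphism, which is manifestly natural in $N$ because every map in sight is assembled from the structure maps of $N$. I expect the only delicate point to be the bookkeeping: reading off the three face maps $\epsilon^i$ together with the $b_0$-convention correctly, and correctly transporting the presentation map of $\Omega_C$ through $N\otimes_{\H(C)}(-)$ via $N\otimes_{\H(C)}C^a\cong N(a)$ and the balancing relation. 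Once these are pinned down, the matching of the two relation subgroups is immediate.
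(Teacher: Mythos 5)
Your proposal is correct and follows essentially the same route as the paper: both identify $HH_1({\sf G}_*(C,N))$ with the cokernel of $\partial\colon\bigoplus_{a,b}N(ab)\to\bigoplus_a N(a)$ via Lemma \ref{hh1} and then match this with $N\otimes_{\H(C)}\Omega_C$ through the assignment $x\mapsto x\otimes da$. The paper merely asserts that this assignment is an isomorphism, whereas you supply the verification (explicit face maps, the presentation $\bigoplus_{(a,b)}C^{ab}\to\bigoplus_a C^a\to\Omega_C\to 0$, and the comparison of relation subgroups), and your bookkeeping checks out.
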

 \begin{proof}
Thanks to Lemma \ref{hh1} one has
$HH_1({\sf G}_*(C,N)$ is isomorphic to the cokernel of the map
$$\partial: \bigoplus_{a,b\in C}N(ab)\to \bigoplus _{a\in C} N(a)$$
As usual, we let $i_a:N(a)\to \bigoplus _{a\in C} N(a)$ be the canonical inclusion. For an element $x\in N(a)$, the  class of $i_a(x)$ in $HH_1({\sf G}_*(C,N))$ is denoted by ${\sl cl}(a;x)$.
Then $${\sl cl}(a;x)\mapsto x\otimes da$$
defines the isomorphism $HH_1({\sf G}_*(C,N))\cong N\t_{\H(C)}\Omega_{C}.$ 
 \end{proof} 

\begin{Le}\label{sizuste}
Let $C$ be a commutative monoid and let
 $$0\to N_1\to N\to N_2\to 0$$
ibe a short exact sequence of right $\H(C)$-modules, then
$$0\to {\sf G}_*(C,N_1)\to {\sf G}_*(C,N)\to {\sf C}_*(C,N_2)\to 0$$
is a $\Y$-exact sequence of left $\Gamma$-modules.
\end{Le}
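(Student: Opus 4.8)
The plan is to check the two defining conditions of a $\Y$-exact sequence separately. The first is that the displayed sequence is short exact as a sequence of left $\G$-modules. Since $\G$-$mod$ is a functor category, exactness is tested objectwise, and by definition ${\sf G}_*(C,-)([n])=\bigoplus_{(a_1,\dots,a_n)\in C^n}N(a_1\cdots a_n)$ is a direct sum of the evaluation functors $N\mapsto N(a_1\cdots a_n)$. As exactness of $0\to N_1\to N\to N_2\to 0$ is itself objectwise, each $0\to N_1(a)\to N(a)\to N_2(a)\to 0$ is exact, and a direct sum of short exact sequences is short exact; hence the displayed sequence is short exact.

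The real content is the surjectivity on invariants, and the key step is to make the $\Sigma_n$-action on ${\sf G}_*(C,N)([n])$ explicit. For a bijection $\sigma\in\Sigma_n=Aut_\G([n])$ the defining formula for $f_*$ gives $b_0=1$ (no entry of the tuple is sent to $0$) and $b_j=a_{\sigma^{-1}(j)}$, so $\sigma$ carries the summand indexed by $(a_1,\dots,a_n)$ identically onto the summand indexed by $(a_{\sigma^{-1}(1)},\dots,a_{\sigma^{-1}(n)})$. Because $C$ is commutative these two tuples have the same product, so $\Sigma_n$ merely permutes the summands, by permuting the indexing tuples, while acting by the identity on each coefficient group. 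Grouping the tuples by their common product $c=a_1\cdots a_n$ and writing $T_c\subseteq C^n$ for the $\Sigma_n$-set of tuples with product $c$, I obtain
$${\sf G}_*(C,N)([n])\cong\bigoplus_{c\in C}N(c)\t\Z[T_c],$$
with $\Sigma_n$ acting trivially on $N(c)$ and through $T_c$ on $\Z[T_c]$.

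Finally I would invoke the elementary fact that invariants of a permutation module with trivial coefficients are exact: for a finite group $G$ acting on a set $T$ and an abelian group $V$ with trivial action there is a natural isomorphism $(V\t\Z[T])^{G}\cong\bigoplus_{O\in T/G}V$, one copy of $V$ per orbit. Applying this with $G=\Sigma(\ll)$ to each summand yields
$${\sf G}_*(C,N)([n])^{\Sigma(\ll)}\cong\bigoplus_{c\in C}\ \bigoplus_{O\in T_c/\Sigma(\ll)}N(c),$$
naturally in $N$, where the orbit set $T_c/\Sigma(\ll)$ is independent of the coefficient module. Under this identification the induced map to ${\sf G}_*(C,N_2)([n])^{\Sigma(\ll)}$ is a direct sum of the surjections $N(c)\to N_2(c)$ coming from objectwise exactness, hence is surjective for every partition $\ll$ with $s(\ll)=n$. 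This is precisely $\Y$-exactness.

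I expect the only genuine obstacle to be the identification made in the second paragraph: that $\Sigma_n$ acts by honest permutation of the summands with trivial action on the coefficient groups. This rests on commutativity of $C$ (so that the product $a_1\cdots a_n$ is permutation invariant) and on the bookkeeping that $b_0=1$ for a bijection; once this is in place, regrouping by the product and the standard exactness of invariants of permutation modules finish the argument with no further difficulty.
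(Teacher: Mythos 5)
Your argument is correct and is essentially the paper's own proof: the paper likewise computes ${\sf G}_*(C,N)([n])^{\Sigma(\lambda)}$ as a direct sum, indexed by the orbits of $C^n$ under $\Sigma(\lambda)$, of the groups $N(a_1\cdots a_n)$ (well defined on orbits by commutativity of $C$), and then reads off surjectivity from the objectwise surjectivity of $N\to N_2$. Your version merely makes explicit the intermediate bookkeeping (the permutation action with $b_0=1$ and the identification of invariants of a permutation module), which the paper leaves implicit.
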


\begin{proof} 
 For a partition $\ll$ of $n$ and a set $P$
we denote by $P^\ll$  theset of orbits of the cartesian product
$P^n$ under the action of the group $\Sigma(\ll)\subset \Sigma_n$.
In particular we have a set $C^\ll$. For any element $\mu\in
C^{\ll}$ we put $N_\mu:=N({a_1\cdots a_n})$, where
$(a_1,\cdots,a_n)\in \mu$. Since
$${\sf G}_*(C,N)([n])^{\Sigma(\ll)}=\bigoplus_{\mu\in C^\ll}N_\mu$$
the result follows.
\end{proof}

By the same argument we have also the following.
\begin{Le}
 Let $f:D\to C$ be a surjective homomorphism of commutative
monoids, then for any  right  $\H(C)$-module  $N$ 
 the
induced morphism of left  $\Ga$-modules
%$${\sf G}^*(C,M)\to
%{\sf G}^*(K,M)$$ is a $\Y$-monomorphism.  Similarly, for any  right $\H(C)$-module $N$
 %the
%induced morphism of  $\Ga$-modules
$${\sf G}_*(D,M)\to
{\sf G}_*(C,M)$$ is a $\Y$-epimorphism. 

\end{Le}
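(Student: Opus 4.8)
The plan is to adapt the argument of the immediately preceding Lemma \ref{sizuste} to the present setting, replacing the role of a short exact sequence by that of a surjective monoid homomorphism. Recall that to check a morphism $\phi$ of left $\Gamma$-modules is a $\Y$-epimorphism one must verify, for every partition $\ll$ with $s(\ll)=n$, that the induced map on $\Sigma(\ll)$-invariants ${\sf G}_*(D,M)([n])^{\Sigma(\ll)}\to {\sf G}_*(C,M)([n])^{\Sigma(\ll)}$ is surjective. (Here $M$ should be a right $\H(C)$-module $N$, pulled back along $f$ to a right $\H(D)$-module via the functor $\H(f):\H(D)\to\H(C)$.) So the entire problem is reduced to understanding these invariant subgroups explicitly.

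First I would record, exactly as in Lemma \ref{sizuste}, the computation of the invariants. For a set $P$ and a partition $\ll$ of $n$, writing $P^\ll$ for the set of orbits of $P^n$ under $\Sigma(\ll)$, one has for any right $\H(C)$-module $N$ the identification
$${\sf G}_*(C,N)([n])^{\Sigma(\ll)}=\bigoplus_{\mu\in C^\ll}N_\mu,$$
where $N_\mu:=N(a_1\cdots a_n)$ for a representative $(a_1,\cdots,a_n)\in\mu$. This is well defined because $a_1\cdots a_n$ is invariant under permutation of coordinates, hence depends only on the orbit $\mu$, and the formula for $f_*$ with $f$ ranging over automorphisms shows that the invariants are precisely the indicated sum of summands. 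The same computation applies verbatim to $D$ with the pulled-back module.

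Next I would describe the induced map under these identifications. A surjection $f:D\to C$ induces a map of orbit sets $D^\ll\to C^\ll$, and for each orbit $\nu\in D^\ll$ lying over $\mu\in C^\ll$ the map sends the summand $M_\nu=N(f(b_1)\cdots f(b_n))=N(f(b_1\cdots b_n))$ to the summand $N_\mu$ by the identity (since $f(b_1\cdots b_n)=a_1\cdots a_n$ when $(b_1,\cdots,b_n)$ maps to a representative of $\mu$). Thus the map on invariants is a direct sum, over $\mu\in C^\ll$, of maps indexed by the fiber of $D^\ll\to C^\ll$ over $\mu$, each a canonical isomorphism onto $N_\mu$. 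Surjectivity is then immediate provided this fiber is nonempty for every $\mu$, i.e. provided $D^\ll\to C^\ll$ is surjective.

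The main obstacle, and the only substantive point, is therefore to verify that the map of orbit sets $D^\ll\to C^\ll$ is surjective. This is where the surjectivity of $f$ is used: given $(a_1,\cdots,a_n)\in C^n$, lift each $a_i$ to some $b_i\in D$ with $f(b_i)=a_i$, giving $(b_1,\cdots,b_n)\in D^n$ mapping to the prescribed tuple, and the orbit of this lift maps to the orbit of $(a_1,\cdots,a_n)$. I expect no difficulty here beyond bookkeeping; the surjectivity of $f$ passes to $D^n\to C^n$ and hence to $D^\ll\to C^\ll$ by passing to orbits. Consequently each component map on invariants is surjective (indeed split, by choosing one lift per fiber), which is exactly the $\Y$-epimorphism condition, completing the argument.
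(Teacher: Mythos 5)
Your argument is correct and is essentially the paper's own proof: the paper likewise invokes the orbit decomposition ${\sf G}_*(C,N)([n])^{\Sigma(\ll)}=\bigoplus_{\mu\in C^\ll}N_\mu$ from the preceding lemma and concludes from the surjectivity of $D^\ll\to C^\ll$, which you verify by the same coordinatewise lifting. You merely spell out the details (including the correct reading of the statement, with $N$ pulled back along $f$) that the paper leaves implicit.
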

\begin{proof} In the notation of the proof Lemma \ref{sizuste} the map $D^\lambda\to C^\lambda$ is surjective and the result follows.
\end{proof}

\begin{Le}\label{rezolventa}
%{\rm i)} Let $\ee:X_*\to R$ be a simplicial resolution in the
%category of commutative algebras and let $A$ be an $R$-module. If
%$R$ and $X_n$ are free as an abelian groups, $n\geq 0$, then  the
%associated chain complex of the cosimplicial $\Ga$-module ${\sf
%L}(R,A)\to C^*({\sf L}(X_*,A))$ is a $\Y-$resolution.
Let $\ee:X_*\to C$ be a simplicial resolution in the
category of commutative monoids and  $N$ be a  right  $\H(C)$-module.
 Then 
%the associated cochain complexes of the
%cosimplicial $\Ga$-module ${\sf L}^*(C,M)\to C^*({\sf L}^*(X_*,M))$ is a
%$\Y-$resolution.
%Similarly, if $N$ is a right $\H(C)$-module, then 
the associated chain complexes of the
simplicial left $\Ga$-module ${\sf G}_*(X_*,N) \to {\sf G}_*(C,N)$ is a
$\Y-$resolution. 
\end{Le}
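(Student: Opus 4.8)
The plan is to unwind the meaning of ``$\Y$-resolution'' and reduce the acyclicity to a statement about underlying simplicial sets. By the proper-class description, the augmented simplicial left $\Ga$-module ${\sf G}_*(X_*,N)\to {\sf G}_*(C,N)$ is a $\Y$-resolution precisely when, for every $[n]$ and every partition $\lambda$ with $s(\lambda)=n$, the augmented chain complex obtained by applying the invariants functor $(-)^{\Sigma(\lambda)}$ in the internal variable $[n]$ is acyclic. By the computation in the proof of Lemma~\ref{sizuste} one has ${\sf G}_*(X_p,N)([n])^{\Sigma(\lambda)}=\bigoplus_{\mu\in X_p^\lambda}N_\mu$, so I would first fix $[n]$ and $\lambda$ and study the simplicial abelian group $p\mapsto \bigoplus_{\mu\in X_p^\lambda}N_\mu$ together with its augmentation to $\bigoplus_{\mu\in C^\lambda}N_\mu$.

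First I would split everything over $C$. Since each face and degeneracy of $X_*$ is a monoid homomorphism compatible with the augmentation $\epsilon\colon X_*\to C$, the map $\phi_p\colon X_p^n\to C$, $(a_1,\dots,a_n)\mapsto \epsilon(a_1)\cdots\epsilon(a_n)$, is simplicial and $\Sigma_n$-invariant. Hence the simplicial abelian group above decomposes as a direct sum over $c\in C$ of $\big(\Z[\,\phi_\bullet^{-1}(c)\,]\big)^{\Sigma(\lambda)}\otimes N(c)$, where I use that for a finite group acting on a set the invariants of the free abelian group are free on the orbits. Writing $F^c_*:=\phi_\bullet^{-1}(c)\subseteq X_*^n$ for the fibre simplicial set, the problem becomes: show that for each $c$ the augmented complex $\big(\Z[F^c_*]\big)^{\Sigma(\lambda)}\to \big(\Z[\psi^{-1}(c)]\big)^{\Sigma(\lambda)}$ is acyclic, where $\psi\colon C^n\to C$ is multiplication.

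Next I would identify the homotopy type of the fibres. Because $\epsilon\colon X_*\to C$ is a resolution, its underlying map of simplicial sets is a weak equivalence onto the discrete set $C$; thus $\pi_0(X_*)\cong C$ and each component $X_*^{(b)}:=\epsilon^{-1}(b)$ is nonempty and weakly contractible. Consequently $F^c_*=\bigsqcup_{b_1\cdots b_n=c}\prod_{i=1}^nX_*^{(b_i)}$ is a disjoint union of weakly contractible simplicial sets, one for each tuple $(b_1,\dots,b_n)$ with product $c$. Forgetting $\Sigma(\lambda)$ this already shows $\Z[F^c_*]\to\Z[\psi^{-1}(c)]$ is a quasi-isomorphism, and tensoring with $N(c)$ is harmless since the only nonvanishing homology is the free group $\Z[\psi^{-1}(c)]$.

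The remaining, and genuinely delicate, point is to carry this through the invariants $(-)^{\Sigma(\lambda)}$. I would group the components of $F^c_*$ into $\Sigma(\lambda)$-orbits; for a tuple $\beta$ with stabiliser $H=\Sigma(\lambda)_\beta$ the orbit contributes an induced module, and the standard identity $(\mathrm{Ind}_H^{\Sigma(\lambda)}V)^{\Sigma(\lambda)}=V^H$ reduces the invariants to $\big(\Z[\prod_iX_*^{(b_i)}]\big)^{H}$. Here $H$ permutes the equal factors $X_*^{(b)}$, so this complex is a tensor product of invariant complexes $\big(\Z[(X_*^{(b)})^{m}]\big)^{\Sigma_m}=\Z[\mathrm{SP}^m X_*^{(b)}]$ of symmetric products. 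The key input I would invoke is that the symmetric product of a weakly contractible simplicial set is again weakly contractible, so each such factor is $\Z$-acyclic; a K\"unneth argument (no Tor appears, all terms being degreewise free) then shows each orbit summand resolves a single copy of $\Z$, matching one basis element of $\big(\Z[\psi^{-1}(c)]\big)^{\Sigma(\lambda)}=\Z[\psi^{-1}(c)/\Sigma(\lambda)]$. Summing over orbits, over $c\in C$, and over all $[n]$ and $\lambda$ yields the $\Y$-acyclicity, i.e.\ the asserted $\Y$-resolution. I expect this symmetric-product step to be the main obstacle: the subtlety is precisely that $(-)^{\Sigma(\lambda)}$ does not commute with homology in general, and what rescues the argument is the permutation-module structure of the chains on $X_*^n$ together with the contractibility of symmetric products, a Dold--Thom type fact.
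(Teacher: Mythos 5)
Your argument is correct and is essentially the paper's own proof written out in full: the paper reduces the claim to the statement that $\ee^\ll\colon X_*^\ll\to C^\ll$ is a weak equivalence onto a discrete simplicial set (citing Bousfield--Friedlander for this), which is exactly your identification of the $\Sigma(\ll)$-invariants with free modules on orbit sets together with the weak contractibility of the fibres. The only cosmetic difference is that you prove the key input by hand --- decomposing the fibres into products of symmetric powers of the weakly contractible components and invoking the Dold--Thom type preservation of weak equivalences by $\mathrm{SP}^m$ --- where the paper simply quotes the reference.
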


\begin{proof}  Since $X_*^\lambda \to C^\lambda$ is a weak equivlence the result follows.

\end{proof} 
\subsection{Proof of Theorem \ref{grAQ}} Thanks to Lemma \ref{nuli} Theorem is true for $i=0$. First we consider the  case,
when $C=\mathbb{N}$ is the free commutative monoid with a generator $t$.
In this case ${\sf D}_i(C,-)=0$, if $i>0$. On the other hand ${\sf G}_*(C,C_n)$ is $\Y$-projective thanks to Lemma \ref{3.1}. Therefore  the result is true in this case. It follows from Lemma \ref{2.4}, Lemma \ref{3.2} and Lemma 4.2 of \cite{aq}  that the result is true if $C$ is  a free commutative monoid and $N$ is projective. By Lemma \ref{sizuste} the functor 
$\pi^\Y_*({\sf G}_*(C,-))$ assignes the long exact sequence to a
short exact sequence of right $\H(C)$-modules. Therefore 
we can consider such an exact sequence
associated to a short exact sequence of right $\H(C)$-modules
$$0\to N_1\to F\to N\to 0$$
with projective $F$. Since the result is true if $i=0$ one obtains by
induction on $i$, that $AQ_i({\sf G}_*(C,-))=0$ provided $i>0$ and $C$ is free commutative monoid.

Now consider the  general case. Let $P_*\to C$ be a free simplicial 
resolution in the category of commutative monoids. Then we have
$$N\t_{\H(P)_*} \Omega \cong {\pi^\Y}_0({\sf G}_*(C,N))$$
Thanks to Lemma \ref{rezolventa} 
$C_*({\sf G}_*(P_*,N))\to {\sf G}_*(C,N)$ is a $\Y$-resolution
consisting with $AQ_*$-acyclic objects and the result follows.

\subsection{Applications}  Let $C$ be  a commutative monoid, $M$ be a left $\H(C)$-module and $N$ be a right $\H(C)$-module.  For the $\Gamma$-modules ${\sf G}_*(C,N)$ and ${\sf G}^*(C,M)$ one can apply the reach theory of functor homology developped in \cite{inv}, \cite{hodge}, \cite{aq}. For example if one applies the Hochschild and Harrison theoryies one gets groups $HH_*(C,N)$, $Harr_*(C,N)$ and $HH^*(C,M)$, $Harr^*(C,M)$. 
Comparing with definitions one sees that  $HH^*(C,M)$ is nothing but  Leech cohomology \cite{leech}.
If $K$ is a field of characteristic zero, then we have 
  $$D_*(C,N)=Harr_{*+1}(C,N), \   \ \  D^*(C,M)=Harr^{*+1}(C,M)$$
this follows from general results valid for arbitrary   $\Gamma$-modules \cite{hodge},\cite{aq}.
In particular this solves the cocycle problem for Grillet  cohomology \cite{gr5}. By theorem \ref{Hodge}  we also obtain that the Grillet cohomology is a direct summand of  Leech cohomology.

\end{document}